\newcommand{\newsection}[1]{\setcounter{equation}{0} \section{#1}}
\newtheorem{thm}{Theorem}[section]
\newtheorem{prop}[thm]{Proposition}
\newtheorem{lem}[thm]{Lemma}
\newtheorem{cor}[thm]{Corollary}
\newtheorem*{theorem*}{Theorem}
\newtheorem*{statement*}{Statement}
\theoremstyle{definition}
\newtheorem{definition}[thm]{Definition}
\theoremstyle{remark}
\numberwithin{equation}{section}
\newcommand{\D}{\mathbb{D}}
\newcommand{\C}{\mathbb{C}}
\newcommand{\Z}{\mathbb{Z}}
\newcommand{\clb}{\mathcal{B}}
\newcommand{\cle}{\mathcal{E}}
\newcommand{\clh}{\mathcal{H}}
\newcommand{\clk}{\mathcal{K}}
\newcommand{\cln}{\mathcal{N}}
\newcommand{\cls}{\mathcal{S}}
\newcommand{\clu}{\mathcal{U}}
\newcommand{\clw}{\mathcal{W}}
\newcommand{\raro}{\rightarrow}
\begin{document}

\title[Orthogonal decompositions II]{Orthogonal decompositions and twisted isometries II}


\author[Rakshit]{Narayan Rakshit}
\address{Narayan Rakshit, Department of Mathematics, Visvesvaraya National Institute of Technology, South Ambazari Road, Ambazari, Nagpur, Maharashtra 440010, India}
\email{narayan753@gmail.com}

\author[Sarkar]{Jaydeb Sarkar}
\address{J. Sarkar, Indian Statistical Institute, Statistics and Mathematics Unit, 8th Mile, Mysore Road, Bangalore, 560059,
India}
\email{jay@isibang.ac.in, jaydeb@gmail.com}

\author[Suryawanshi]{Mansi Suryawanshi}
\address{Mansi Suryawanshi, Statistics and Mathematics Unit, Indian Statistical Institute, 8th Mile, Mysore Road, Bangalore, Karnataka - 560059, India}
\email{mansisuryawanshi1@gmail.com}

\subjclass[2010]{46L65, 47A20, 46L05, 81S05, 30H10, 46J1, 15B99}

\keywords{Isometries, von Neumann and Wold decompositions, shift operators, wandering subspaces, invariant subspaces, Hardy space, polydisc}

\begin{abstract}
We classify tuples of (not necessarily commuting) isometries that admit von Neumann-Wold decomposition. We introduce the notion of twisted isometries for tuples of isometries and prove the existence of orthogonal decomposition for such tuples. The former classification is partially inspired by a result that was observed more than three decades ago by Gaspar and Suciu. And the latter result generalizes Popovici's orthogonal decompositions for pairs of commuting isometries to general tuples of twisted isometries which also includes the case of tuples of commuting isometries. Our results unify all the known orthogonal decomposition related results in the literature.
\end{abstract}


\maketitle


\section{Introduction}\label{sec:intro}

This article is concerned with orthogonal decompositions and representations of $n$-tuples of (not necessarily commuting) isometries acting on Hilbert spaces. We assume throughout that $n (>1)$ is a natural number and Hilbert spaces are separable and over $\C$. The present article can be considered as a sequel to \cite{RSS}, although fairly different, more general in spirit, and almost an independent read.

The starting point for our analysis is the classical von Neumann–Wold decompositions of isometries on Hilbert spaces. Let $\clh$ be an arbitrary but fixed Hilbert space. A bounded linear operator $V$ on $\clh$ (in short $V \in \clb(\clh)$) is said to be an isometry if $\|Vf\| = \|f\|$ for all $f \in \clh$, or equivalently, $V^* V = I_{\clh}$. Of course, a trivial example of an isometry is unitary, whereas a less trivial example is a shift operator. Recall that an isometry $V$ is a \textit{shift} if
\[
\text{SOT}-\lim_{m \rightarrow \infty} V^{*m} = 0.
\]
The classical von Neumann–Wold decomposition theorem says that
these examples of isometries are rather typical:

\begin{thm}[von Neumann–Wold decomposition]\label{thm: Wold}
Let $V \in \clb(\clh)$ be an isometry. Then $\clh_{\emptyset} := \cap_{m=0}^\infty V^m \clh$ reduces $V$. Moreover,  $V|_{\clh_{\emptyset}}$ is a unitary and $V|_{\clh_{\{1\}}}$ is a shift, where
\[
\clh_{\{1\}}:= \clh_{\emptyset}^\perp = \bigoplus_{m=0}^\infty V^m (\ker V^*).
\]
\end{thm}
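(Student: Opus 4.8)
The plan is to reduce everything to the single elementary decomposition $\clh = \ran V \oplus \ker V^*$ and then iterate it. Since $V$ is an isometry, $\ran V$ is closed, so $\clh = \ran V \oplus (\ran V)^\perp = \ran V \oplus \ker V^*$. First I would record that the subspaces $V^k(\ker V^*)$, $k \ge 0$, are pairwise orthogonal: for $0 \le j < k$ and $a,b \in \ker V^*$ one has $\langle V^j a, V^k b\rangle = \langle a, V^{k-j} b\rangle = 0$, using $V^*V = I$ (so $(V^*)^j V^k = V^{k-j}$ for $k\ge j$) and $a \perp \ran V$. Applying $V^m$ to the basic decomposition and using that an isometry preserves orthogonality, one obtains, for every $m \ge 1$,
\[
\clh \;=\; \Big(\bigoplus_{k=0}^{m-1} V^k(\ker V^*)\Big) \,\oplus\, V^m \clh .
\]

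Next I would identify the two summands in the statement. Set $\clm := \bigoplus_{k=0}^{\infty} V^k(\ker V^*)$. The displayed identity says that $V^m\clh$ is the orthogonal complement of $\bigoplus_{k=0}^{m-1} V^k(\ker V^*)$; letting $m \to \infty$, the left-hand subspaces decrease to $\clh_\emptyset = \cap_m V^m\clh$ while their complements decrease to $\clm^\perp$, whence $\clh_\emptyset = \clm^\perp$, i.e.\ $\clh_\emptyset^\perp = \clm$. This is exactly the asserted formula $\clh_{\{1\}} = \bigoplus_{m=0}^\infty V^m(\ker V^*)$. The one point here that is not purely formal is this passage to the limit for a nested sequence of closed subspaces; it is the only genuine step, and it is handled by the standard fact that for a decreasing sequence of closed subspaces $\clk_m$ with $\clk_m^\perp = \cll_m$ one has $\cap_m \clk_m = \big(\overline{\bigcup_m \cll_m}\big)^\perp$.

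Finally I would check the reducing property and the nature of the two restrictions. Since $V$ maps $V^k(\ker V^*)$ isometrically onto $V^{k+1}(\ker V^*)$, the subspace $\clm = \clh_{\{1\}}$ is $V$-invariant; since $V^*$ maps $V^k(\ker V^*)$ onto $V^{k-1}(\ker V^*)$ for $k \ge 1$ and annihilates $\ker V^*$, it is also $V^*$-invariant. Hence $\clm$ reduces $V$, and therefore so does $\clh_\emptyset = \clm^\perp$. On $\clh_\emptyset$: the restriction is an isometry, and because every $f \in \clh_\emptyset$ lies in $V\clh = \ran V$, writing $f = Vg$ gives $V^*f = g$ and $f = V(V^*f)$ with $V^*f \in \clh_\emptyset$ (reducing), so $V|_{\clh_\emptyset}$ is onto, hence unitary. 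On $\clh_{\{1\}}$: with respect to the orthogonal sum $\bigoplus_k V^k(\ker V^*)$, the operator $V|_{\clh_{\{1\}}}$ is a unilateral shift of multiplicity $\dim \ker V^*$; in particular $V^{*m}$ annihilates every finite partial sum $\bigoplus_{k=0}^{N} V^k(\ker V^*)$ once $m > N$, and since these partial sums are dense and $\norm{V^{*m}} \le 1$, it follows that $\text{SOT}-\lim_{m\to\infty} V^{*m} = 0$ on $\clh_{\{1\}}$, i.e.\ $V|_{\clh_{\{1\}}}$ is a shift. I expect no real obstacle beyond the limiting argument flagged above.
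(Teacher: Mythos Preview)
Your argument is correct and is essentially the standard textbook proof of the von Neumann--Wold decomposition. Note, however, that the paper does not supply its own proof of this statement: Theorem~\ref{thm: Wold} is quoted in the introduction as the classical result of von Neumann and Wold and is used as background throughout, so there is no in-paper proof to compare against.

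One small wording issue: in the limiting step you write that ``the left-hand subspaces decrease to $\clh_\emptyset$ while their complements decrease to $\clm^\perp$.'' The complements $(V^m\clh)^\perp = \bigoplus_{k=0}^{m-1} V^k(\ker V^*)$ \emph{increase} to $\clm$, not decrease; what decreases to $\clm^\perp$ is the sequence $V^m\clh$ itself. The standard fact you invoke at the end of that paragraph is stated correctly and does the job, so this is only a phrasing slip, not a mathematical one. Everything else---pairwise orthogonality of the $V^k(\ker V^*)$, the iterated decomposition $\clh = \bigoplus_{k<m} V^k(\ker V^*) \oplus V^m\clh$, the reducing property, surjectivity of $V|_{\clh_\emptyset}$, and the SOT convergence $V^{*m}\to 0$ on $\clh_{\{1\}}$ via density of the finite partial sums---is clean.
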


Therefore, $V$ gives rise to an orthogonal direct sum of two closed subspaces (one of them is possibly zero) with respect to which $V$ admits a $2 \times 2$ diagonal block matrix whose diagonal blocks are shift and unitary operators. This simple description of isometries is one of the most fundamental results in linear analysis. For instance, Theorem \ref{thm: Wold} plays an essential role in prediction theory \cite{Helson}, time series analysis \cite{Masani}, stochastic process \cite{Miamee}, operator models \cite{Nagy}, $C^*$-algebras \cite{Coburn, Tron, W}, etc.

Theorem \ref{thm: Wold} raises natural and important questions, such as how to determine whether an $n$-tuple of isometries admits (meaningful) orthogonal direct sum decomposition of closed subspaces, and even if such decomposition exists, what are the representations of tuples of isometries, whether such representations are unique or canonical, and of course, what are the invariants. The general problem seems completely inaccessible as even the structure of commuting pairs of isometries is notoriously complicated and largely unknown. However, this problem has been settled for certain classes of isometries. Notably, in \cite{Slo}, S{\l}oci\'{n}ski demonstrated a completely satisfactory theory for pairs of doubly commuting isometries. In order to be more precise, let us clarify the meaning of orthogonal decompositions of tuples of isometries. The motivation comes from von Neumann \cite{von}, Wold \cite{Wold}, S{\l}oci\'{n}ski \cite{Slo}, and Popovici \cite{Pop}.

\begin{definition}\label{def: orth decomp}
An $n$-tuple of isometries $V = (V_1, \ldots, V_n)$ acting on $\clh$ admits an orthogonal decomposition if there exist $2^n$ closed subspaces $\{\clh_A\}_{A \subseteq I_n}$ of $\clh$ (some of these subspaces may be trivial) such that

\begin{enumerate}
\item $\clh_A$ reduces $V$ for all $A \subseteq I_n$,
\item $\clh = \bigoplus_{A \subseteq \{1,\ldots, n\}} \clh_A$, and

\item ${V_i}|_{\clh_A}$, $i \in A$, is a shift, and ${V_j}|_{\clh_A}$, $j \in A^c$, is a unitary for all $A \subsetneqq \{1,\ldots, n\}$.
\end{enumerate}

\noindent If, in addition, (3) holds for $A = \{1, \ldots, n\}$, then we say that $V$ admits a von Neumann–Wold decomposition.
\end{definition}

The meaning of condition (1) is that $\clh_A$ reduces $V_i$ for all $i=1, \ldots, n$. At this point, we pause to warn the reader that, in the definition of orthogonal decomposition; we do not impose any particular condition on the reducing subspace $\clh_{\{1, \ldots, n\}}$.

S{\l}oci\'{n}ski proved that pairs of doubly commuting isometries admit von Neumann–Wold decomposition. Recall that an $n$-tuple of isometries $(V_1, \ldots, V_n)$ on $\clh$ is said to be \textit{doubly commuting} if
\begin{equation}\label{eqn: dc def}
V_i^* V_j = V_j V_i^* \qquad (i \neq j).
\end{equation}
In particular, if $(V_1, V_2)$ is a pair of doubly commuting isometries, then there exist four closed joint $(V_1, V_2)$-reducing subspaces $\clh_A$, $A\subseteq \{1,2\}$, such that
\begin{equation}\label{eqn: Sloc decom}
\clh = \bigoplus_{A \subseteq \{1,2\}}\clh_{A},
\end{equation}
and $V_i|_{\clh_A}$ is a shift or unitary according to $i \in A$ or $i \notin A$, respectively. S{\l}oci\'{n}ski's observation (also see Suciu \cite{Suciu}) provided much of the inspiration for the subsequent development of orthogonal decompositions of tuples of isometries. For example, see \cite{Gaspar, Kosiek, S} for doubly commuting analogue in higher dimensions and other general settings.

Now, passing to general tuples of isometries, there are two instances that are relevant to us: Popovici \cite{Pop} proved that a pair of commuting isometries admits an orthogonal decomposition, just as in \eqref{eqn: Sloc decom} above, but with the additional property that the fourth part $(V_1, V_2)|_{\clh_{\{1,2\}}}$ is a weak bi-shift. Here (and similarly for general tuples of operators) $(V_1, V_2)|_{\clh_{\{1,2\}}}$ refers to the pair $(V_1|_{\clh_{\{1,2\}}}, V_2|_{\clh_{\{1,2\}}})$ on $\clh_{\{1,2\}}$. Weak bi-shifts are rather complicated and encode the information of the complexity of pairs of commuting isometries. Secondly, motivated by Heisenberg group $C^*$-algebras (and also \cite{JP}), in \cite{RSS} we introduced the notion of doubly twisted isometries (see Definition \ref{def: doub twist} below) and prove that a tuple of doubly twisted isometries always admits a von Neumann–Wold decomposition. Of course, doubly twisted isometries are fairly noncommutative objects, and therefore, it is curious to observe that the existence of the von Neumann–Wold decomposition is not completely a commutative feature. Note that doubly twisted isometries were referred to as tuples of $\clu_n$-twisted isometries in \cite{RSS}.

In this paper, we study orthogonal decompositions at a higher level. First, we propose a more general framework for noncommuting tuples of isometries and classify tuples admitting von Neumann–Wold decomposition. Our classification unifies all the existing results on von Neumann–Wold decompositions of tuple of isometries. It is worthwhile to mention that our results restricted to the commuting tuples of operators recover the classification of Gaspar and Suciu \cite{Gaspar}. In other words, we point out that the idea of Gaspar and Suciu works for tuples of noncommuting isometries, which is fairly relevant as the existence and subsequent (direct or indirect) applications of orthogonal decomposition to $C^*$-algebras appear to be fruitful (although we do not pursue this direction here). See \cite{Pelle, Palle 1, Weber 3, W}, and also see the central paper \cite{Marek}.

Secondly, we introduce the notion of twisted isometries and prove that an $n$-tuple of twisted isometries $V = (V_1, \ldots, V_n)$ admits an orthogonal decomposition with the additional property that $V|_{\clh_{\{1,\ldots, n\}}}$ is an $n$-tuple of twisted weak shift. This decomposition is more general than that of \cite{Pop} as well as \cite{RSS}. On one hand, our orthogonal decompositions for commuting tuples work for $n$-tuples, $n \geq 2$, and on the other hand, the generalizations of wandering subspaces and weak bi-shifts for $n$-tuples of isometries seem to be of interest as these ideas are not straight extensions of existing two variable theory.

Let us now turn to the technical part: Throughout the paper, when we refer to a \textit{twist} on $\clh$, if not otherwise mentioned, we mean $\binom{n}{2}$ commuting unitaries $\{U_{ij}\}_{1 \leq i < j \leq n}$ on $\clh$ such that
\[
U_{ji} := U_{ij}^* \qquad (1 \leq i < j \leq n).
\]
We interpret twists as a way of measurement of tuples of isometries from being commuting tuples. We now recall the notion of doubly twisted isometries which was introduced in \cite{RSS} and referred to as $\clu_n$-twisted isometry.

\begin{definition}[Doubly twisted isometries]\label{def: doub twist}
An $n$-tuple of isometries $(V_1, \ldots ,V_n)$ on $\clh$ is said to be doubly twisted with respect to a twist $\{U_{st}\}_{s<t} \subseteq \clb(\clh)$ if $V_k \in \{U_{st}\}_{s<t}'$ and
\[
V_i^*V_j=U_{ij}^*V_jV_i^*,
\]
for all $i,j,k=1, \ldots, n$, and $i \neq j$.
\end{definition}

We often suppress the twist $\{U_{ij}\}_{i<j}$ and simply say that $V$ is a doubly twisted isometry. In \cite{RSS}, we proved the existence of von Neumann–Wold decompositions for doubly twisted isometries. We further note that for a doubly twisted isometry $(V_1, \ldots, V_n)$, we necessarily have that $V_iV_j = U_{ij} V_j V_i$ for all $i \neq j$ (cf. \cite[Lemma 3.1]{RSS}). This motivates:

\begin{definition}[Twisted isometries]
An $n$-tuple of isometries $V = (V_1, \ldots ,V_n)$ on $\clh$ is said to be a twisted isometry with respect to a twist $\{U_{st}\}_{s<t} \subseteq \clb(\clh)$ if $V_k \in \{U_{st}\}_{s<t}'$ and
\[
V_iV_j=U_{ij} V_jV_i,
\]
for all $i,j,k=1, \ldots, n$, and $i \neq j$. The tuple $V$ is said to be a twisted isometry if it is a twisted isometry corresponding to some twist.
\end{definition}

The particular case $U_{ij} = I_{\clh}$ for all $i<j$ yields tuples of commuting isometries. As pointed out previously, in the case of pairs of commuting isometries, Popovici \cite{Pop} introduced orthogonal decompositions replacing S{\l}oci\'{n}ski's doubly commuting shift part with weak bi-shift. A pair of commuting isometries is called \textit{weak bi-shift} if
\[
V_1|_{\bigcap_{i \geq 0} \ker V_2^*V_1^i}, \; V_2|_{{\bigcap}_{j \geq 0} \ker V_1^*V_2^j}, \text{ and } V_1V_2,
\]
are shifts. In this context, we remark that a pair of shifts $(V_1, V_2)$ is doubly commuting if and only if $V_1|_{\ker V_2^*}$, $V_2|_{\ker V_1^*}$ and $V_1V_2$ are shifts \cite{Pop}. However, as we will see, for a general $n$-tuple, one needs a little more care in extending the ideas of Popovici. After some preparation on the geometric structure of twisted shifts, in Definition \ref{def: weak shift}, we introduce the notion of twisted weak shifts, a twisted counterpart of Popovici's weak bi-shifts. Finally, in Theorem \ref{thm: twisted dec final}, we prove that a twisted isometry admits an orthogonal decomposition with $V|_{\clh_{I_n}}$ as a twisted weak shift. In summary, an $n$-tuple of twisted isometry $V=(V_1, \ldots, V_n)$ gives rise to a direct sum decomposition just as in the case of doubly commuting isometry but the restriction of $V$ on $\clh_{I_n}$ is (the twisted weak shift) is rather complicated and encode the information of the complexity of tuples of twisted (or even commuting) isometries. In other words, the first $2^n-1$ subspaces of the direct summand are simple and enjoy similar properties to that of S{\l}oci\'{n}ski decomposition, and the Popovici analogue of the $2^n$-th summand is rather the challenging part.

The rest of the paper is organized as follows. In Section \ref{sec: charact wold decom}, we classify tuples of (not necessarily commuting) isometries that admit von Neumann–Wold decomposition. The existence of von Neumann–Wold decompositions of doubly twisted isometries then follows as a simple corollary.

Section \ref{sec: ortho dec} revisits von Neumann-Wold decompositions of doubly twisted isometries, which was first observed in  \cite{RSS}. The present proof follows as an application of the classification of von Neumann-Wold decompositions for general tuples of isometries. On one hand, this makes the paper an independent read and easy reference for the remaining results. On the other hand, this connects the recent developments with the techniques which were observed more than three decades ago by Gaspar and Suciu \cite{Gaspar}.

Section \ref{sec: wandering sub} sets the stage for wandering subspaces for twisted isometries. This section is central to the theory of orthogonal decomposition of twisted isometries.

In Section \ref{sec: weak shift}, we introduce the notion of twisted weak shifts, the twisted counterpart of Popovici's weak bi-shifts. The idea of twisted weak shifts comes from a classification of twisted shifts (see Proposition \ref{prop: n-shift}). Section \ref{sec: Popovici-Wold type decomposition} deals with orthogonal decompositions of twisted isometries.

\newsection{Characterizations of von Neumann-Wold Decompositions}\label{sec: charact wold decom}

In this section, we work in the category of noncommuting tuples of isometries. More specifically, we classify tuples of isometries that admit von Neumann–Wold decomposition.

Let $V \in \clb(\clh)$ be an isometry. To simplify the notation, we set
\begin{equation}\label{eqn: rep of Wold dec}
\clh_{V, u} = \bigcap_{m \in \Z_+} V^m \clh, \text{ and } \clh_{V, s} = \bigoplus_{m \in \Z_+} V^m (\ker V^*),
\end{equation}
the unitary part and the shift part, respectively, of $V$ (see Theorem \ref{thm: Wold}). Therefore, $\clh = \clh_{V, s} \oplus \clh_{V, u}$, where $V|_{\clh_{V, s}}$ is a shift and $V|_{\clh_{V, u}}$ is a unitary.

The following notation will be convenient throughout this paper:
\[
I_n = \{1, \ldots, n\}.
\]
Let $V = (V_1, \ldots, V_n)$ be an $n$-tuple of isometries acting on $\clh$. Recall from Definition \ref{def: orth decomp} that $V$ admits a von Neumann–Wold decomposition if there exist $2^n$ closed subspaces $\{\clh_A\}_{A \subseteq I_n}$ of $\clh$ such that $\clh = \bigoplus_{A \subseteq I_n} \clh_A$, and
\begin{enumerate}
\item $\clh_A$ reduces $V$ for all $A \subseteq I_n$, and

\item ${V_i}|_{\clh_A}$, $i \in A$, is a shift, and ${V_j}|_{\clh_A}$, $j \in A^c$, is a unitary for all $A \subseteq I_n$.
\end{enumerate}

We are now ready for the characterization of tuples of isometries admitting von Neumann-Wold decomposition.

\begin{thm}\label{thm: class of decom}
Let $V = (V_1, \ldots, V_n)$ be an $n$-tuple of isometries on $\clh$. The following are equivalent:

\begin{enumerate}
\item $V$ admits a von Neumann-Wold decomposition.
\item $\clh_{V_i,u}$ reduces $V_j$ for all $i, j \in I_n$.
\item $\clh_{V_i,s}$ reduces $V_j$ for all $i, j \in I_n$.
\end{enumerate}
\end{thm}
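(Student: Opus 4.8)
The plan is to prove the cycle of implications $(1) \Rightarrow (2) \Rightarrow (3) \Rightarrow (1)$, or perhaps more symmetrically to note that $(2) \Leftrightarrow (3)$ is essentially immediate and then deal with the equivalence of these with $(1)$. First observe that for a single isometry $V_i$, the subspaces $\clh_{V_i,u}$ and $\clh_{V_i,s}$ are orthogonal complements of each other, and a subspace is reducing for $V_j$ if and only if its orthogonal complement is; hence $\clh_{V_i,u}$ reduces $V_j$ precisely when $\clh_{V_i,s}$ does, giving $(2) \Leftrightarrow (3)$ at once.

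For $(1) \Rightarrow (2)$: suppose $V$ admits a von Neumann--Wold decomposition $\clh = \bigoplus_{A \subseteq I_n} \clh_A$. The key point is to identify $\clh_{V_i,u}$ in terms of this decomposition. Fix $i$. For each $A$, the operator $V_i|_{\clh_A}$ is a unitary if $i \notin A$ and a shift if $i \in A$; since the Wold decomposition of an isometry is unique, the unitary part of $V_i$ on $\clh$ is exactly $\bigoplus_{A : i \notin A} \clh_A$ and the shift part is $\bigoplus_{A : i \in A} \clh_A$. That is, $\clh_{V_i,u} = \bigoplus_{A \not\ni i} \clh_A$. Since every $\clh_A$ reduces every $V_j$, this direct sum of reducing subspaces is again reducing for each $V_j$, which is $(2)$.

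For $(3) \Rightarrow (1)$ (equivalently, $(2)\Rightarrow(1)$): this is the substantive direction and will be the main obstacle. The idea is to build the $2^n$ subspaces by intersecting the Wold-type pieces of the individual isometries. For $A \subseteq I_n$ set
\[
\clh_A = \Big(\bigcap_{i \in A} \clh_{V_i, s}\Big) \cap \Big(\bigcap_{j \in A^c} \clh_{V_j, u}\Big).
\]
Assuming $(3)$ (hence also $(2)$), each $\clh_{V_i,s}$ and each $\clh_{V_j,u}$ reduces all of $V_1, \ldots, V_n$, so each $\clh_A$, being an intersection of joint reducing subspaces, reduces $V$; this gives condition $(1)$ of Definition~\ref{def: orth decomp}. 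The orthogonality and completeness $\clh = \bigoplus_{A \subseteq I_n} \clh_A$ should follow by induction on $n$: for a single isometry $\clh = \clh_{V_1,s}\oplus \clh_{V_1,u}$, and at each stage one further splits every piece already obtained by the orthogonal decomposition $\clh = \clh_{V_{k},s}\oplus \clh_{V_k,u}$ restricted to that piece --- here one uses that $\clh_{V_k,s}$ and $\clh_{V_k,u}$ reduce the relevant operators so that the restriction of $V_k$ to each existing summand again has its Wold decomposition landing inside that summand, and the pieces it produces are exactly the intersections with $\clh_{V_k,s}$ and $\clh_{V_k,u}$. Finally, for the shift/unitary condition $(2)$ of the von Neumann--Wold decomposition: on $\clh_A$, for $i \in A$ we have $\clh_A \subseteq \clh_{V_i,s}$, and restricting a shift to a reducing subspace yields a shift, so $V_i|_{\clh_A}$ is a shift; for $j \in A^c$ we have $\clh_A \subseteq \clh_{V_j,u}$, and the restriction of a unitary to a reducing subspace is unitary, so $V_j|_{\clh_A}$ is unitary.

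The main delicate point, and where I would spend the most care, is verifying that the family $\{\clh_A\}$ is genuinely orthogonal with sum all of $\clh$, i.e.\ that the iterated splitting does not lose or double-count anything; the cleanest route is the induction on $n$ sketched above, at each step invoking that the two Wold pieces of $V_k$ are reducing (by $(2)$/$(3)$) so that "restrict $V_k$ to a current summand and take its Wold decomposition" commutes with the intersections defining the $\clh_A$. One should also double-check the small lemma that a subspace reduces $V_j$ iff its orthocomplement does, and that restrictions of shifts (resp.\ unitaries) to reducing subspaces remain shifts (resp.\ unitaries) --- the latter is standard since $\mathrm{SOT}\text{-}\lim V_i^{*m}=0$ passes to reducing subspaces.
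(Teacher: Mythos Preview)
Your proposal is correct and follows essentially the same route as the paper: the equivalence $(2)\Leftrightarrow(3)$ is immediate, the implication $(1)\Rightarrow(2)$ goes by identifying $\clh_{V_i,u}=\bigoplus_{A\not\ni i}\clh_A$, and for the converse both you and the paper define $\clh_A=\bigcap_{i\in A}\clh_{V_i,s}\cap\bigcap_{j\in A^c}\clh_{V_j,u}$ and build the decomposition by splitting one isometry at a time. The only difference is presentational: the paper proves the inductive step as the inclusion $\clh_A\subseteq \clh_{\tilde A\cup\{j\}}\oplus\clh_{\tilde A}$ via explicit computations with $\bigoplus_m V_j^m(\ker V_j^*\cap\clh_A)$ and $\bigcap_m V_j^m\clh_A$, whereas you invoke the cleaner lemma that for any $V_k$-reducing subspace $\cls$ the Wold parts of $V_k|_{\cls}$ are exactly $\cls\cap\clh_{V_k,s}$ and $\cls\cap\clh_{V_k,u}$ --- which is precisely what those computations establish.
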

\begin{proof}
Evidently, it is enough to prove that (1) and (2) are equivalent. Let $V$ admits a von Neumann-Wold decomposition $\clh = \bigoplus_{A \subseteq I_n} \clh_A$. Fix $i \in I_n$. By the definition of von Neumann-Wold decomposition (or property (2) above) and Theorem \ref{thm: Wold}, we have
\[
\clh_{V_i, s} = \bigoplus_{\substack{B \subseteq I_n \\ i\in B}} \clh_B, \text{ and } \clh_{V_i, u} = \bigoplus_{\substack{B \subseteq I_n \\ i\notin B}} \clh_{B}.
\]
Since $\clh_A$ reduces $V_j$ for all $j \in I_n$ and $A \subseteq I_n$, it follows that $\clh_{V_i, u}$ reduces $V_j$ for all $i, j \in I_n$.

\noindent For the converse, suppose $\clh_{V_i,u}$ (and hence $\clh_{V_i,s}$ too) reduces $V_j$ for all $i, j \in I_n$. For each $A \subseteq I_n$, define
\begin{equation}\label{eqn: rep of Wold dec H A}
\clh_A = \Big[\bigcap_{i \in A} \clh_{V_i,s}\Big] \bigcap \Big[\bigcap_{j \in A^c} \clh_{V_j,u}\Big].
\end{equation}
Clearly, $\clh_A$ {reduces} $V_i$ for all $i \in I_n$ and $A \subseteq I_n$. It then remains to show that $\clh = \bigoplus_{A \subseteq I_n} \clh_A$. Since $\bigoplus_{A \subseteq I_n} \clh_A \subseteq \clh $, it is enough to prove that
\[
\clh = \clh_{V_1,s} \oplus \clh_{V_1, u} \subseteq \bigoplus_{A \subseteq I_n} \clh_A.
\]
For this, we need a general observation: Given $A\subseteq I_m \subsetneqq I_n$ and $j\notin I_m$, we denote by $\tilde{A}$ the set $A$ itself but as a subset of $I_m \cup \{j\}$. We claim that
\[
\clh_A \subseteq \clh_{\tilde{A} \cup \{j\}} \cup \clh_{\tilde{A}}.
\]
Note that $\clh_A$ reduces $V_j$, and also $\ker V_j^*|_{\clh_A} = \ker V_j^* \cap \clh_A$. By applying the von Neumann-Wold Decomposition to the isometry $V_j|_{\clh_A}$, we have
\[
\clh_A = \Big[\underset{k_j \in \Z_+}{\bigoplus} V_j^{k_j} (\ker V^*_j \cap \clh_A)\Big]
\bigoplus
\Big[\underset{k_j \in \Z_+}{\bigcap V_j}^{k_j} \clh_A \Big].
\]
For each $k_j \in \mathbb{Z}_{+}$, it is obvious that
\[
V_j^{k_j} (\ker V^*_j \cap \clh_A)
\subseteq V_j^{k_j} \clh_A \subseteq \clh_A,
\]
and
\[
V_j^{k_j} (\ker V^*_j \cap \clh_A) \subseteq  V_j^{k_j} (\ker V^*_j).
\]
The latter inclusion yields
\[
\bigoplus_{k_j \in \Z_+} V^{k_j}_j (\ker V^*_j \cap \clh_A) \subseteq \bigoplus_{k_j \in \Z_+} V_j^{k_j} (\ker V_j^*) = \clh_{V_{j,s}},
\]
which along with the former inclusion gives
\[
\bigoplus_{k_j \in \Z_+} V_j^{k_j} (\ker V^*_j \cap \clh_A) \subseteq
\clh_A \cap \clh_{V_j, s} = \clh_{\tilde{A} \cup \{j\}}.
\]
We also have
\[
\underset{k_j \in \Z_+}{\bigcap} V_{j}^{k_j} \clh_A \subseteq \clh_A  \bigcap
\Big(\underset{k_j \in \Z_+}{\bigcap} V_j^{k_j} \clh \Big) = \clh_{\tilde{A}}.
\]
This implies $\clh_A \subseteq \clh_{\tilde{A} \cup \{j\}} \oplus \clh_{\tilde{A}}$ and proves the claim. Applying this to $A\subseteq I_m \subsetneqq I_n$ and $j, k \notin I_m$, we obtain
\[
\clh_A \subseteq \clh_{\tilde{A} \cup \{j\}\cup \{k\}}\oplus \clh_{\tilde{A} \cup \{j\}}\oplus \clh_{\tilde{A} \cup \{k\}} \oplus\clh_{\tilde{A}},
\]
where $\tilde{A} = A$ but a subset of $I_m \cup \{j, k\}$. Consider the von-Neumann Wold decomposition for $V_1$ on $\clh$:
\[
\clh = \clh_{V_1, s} {\oplus} \clh_{V_1, u} = \clh_{\{1\}} \oplus \clh_{\emptyset},
\]
where index sets $\{1\}$ and $\emptyset$ on the right side are subsets of $I = \{1\}$. Applying the above recipe repeatedly to $\clh_{\{1\}}$ one sees that
\[
\clh_{\{1\}} \subseteq \underset{A \subseteq J}{\oplus} \clh_{\tilde{\{1\}} \cup {A}},
\]
where $J = \{2,\ldots,n\}$, and $\tilde{\{1\}} = \{1\}$ but a subset of $I_n$. Similarly, $\clh_{\emptyset} \subseteq \underset{A \subseteq J}{\bigoplus} \clh_{A}$. Then
\[
\begin{split}
\clh & = \clh_{\{1\}} \oplus \clh_{\emptyset}
\\
& \subseteq \big(\underset{A \subseteq J}{\oplus} \clh_{\tilde{\{1\}} \cup A}\big) \oplus \big(\underset{A \subseteq J}{\oplus} \clh_A \big)
\\
& = \underset{A \subseteq I_{n}}{\bigoplus} \clh_{A},
\end{split}
\]
completes the proof of the theorem.
\end{proof}

In the case of commuting tuples of isometries, the above result was stated by Gaspar and Suciu (see \cite[Theorem 2]{Gaspar}). The proof in \cite{Gaspar} was mentioned only in the case of $n=3$. It is very curious to observe that the commutative theme of Gaspar and Suciu also works for noncommuting tuples of isometries.

Two particular cases of the above classification are worthy of special attention: doubly commuting tuples of isometries (also see \cite{Gaspar}) and doubly twisted isometries. Of course, the latter notion is more general than the former. We conclude this section with the case of doubly commuting isometries.

Before proceeding further, it is useful to make some standard observations about isometries. The proof follows from representations of unitary and shift parts of isometries as in \eqref{eqn: rep of Wold dec}. Given a closed subspace $\cls \subseteq \clh$, denote by $P_{\cls}$ the orthogonal projection of $\clh$ onto $\cls$.

\begin{lem}\label{lem: iso and proj}
Let $V \in \clb(\clh)$ be an isometry. Then
\begin{enumerate}
\item $P_{\clh_{V, s}} = \text{SOT}-\sum_{m \in \Z_+} V^{m} P_{\ker V^*} V^{*m}$.
\item $P_{\clh_{V, u}} = \text{SOT}-\lim_{m \raro \infty} V^m V^{*m}$.
\end{enumerate}
\end{lem}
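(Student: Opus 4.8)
The plan is to derive both identities directly from the orthogonal decomposition $\clh = \clh_{V,s}\oplus\clh_{V,u}$ together with the explicit representations in \eqref{eqn: rep of Wold dec}, namely $\clh_{V,s}=\bigoplus_{m\in\Z_+}V^m(\ker V^*)$ and $\clh_{V,u}=\bigcap_{m\in\Z_+}V^m\clh$. For part (1), I would first observe that for each fixed $m$ the operator $V^m P_{\ker V^*}V^{*m}$ is the orthogonal projection onto the closed subspace $V^m(\ker V^*)$: indeed $V^{*m}$ maps $V^m(\ker V^*)$ isometrically onto $\ker V^*$ and annihilates every $V^k(\ker V^*)$ with $k\neq m$ (using $V^*V=I$ and $V^{*}(\ker V^*)$-type orthogonality relations), so $V^m P_{\ker V^*}V^{*m}$ is idempotent, self-adjoint, with range $V^m(\ker V^*)$. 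Since the subspaces $\{V^m(\ker V^*)\}_{m\in\Z_+}$ are mutually orthogonal and their orthogonal sum is $\clh_{V,s}$, the partial sums $\sum_{m=0}^{N}V^mP_{\ker V^*}V^{*m}$ are the orthogonal projections onto $\bigoplus_{m=0}^{N}V^m(\ker V^*)$, and these increase to $P_{\clh_{V,s}}$ in the strong operator topology as $N\to\infty$; this gives the asserted SOT convergence of the series.

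For part (2), I would use that $V^mV^{*m}$ is the orthogonal projection onto $V^m\clh$ (again because $V^{*m}V^m=I$, so $V^mV^{*m}$ is a self-adjoint idempotent with range $V^m\clh$). The subspaces $V^m\clh$ are decreasing in $m$, hence $V^mV^{*m}$ is a decreasing sequence of projections, which therefore converges in SOT to the projection onto $\bigcap_{m\in\Z_+}V^m\clh=\clh_{V,u}$. That is exactly $P_{\clh_{V,u}}=\text{SOT}-\lim_{m\to\infty}V^mV^{*m}$.

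Neither step involves a genuine obstacle; the only point requiring a little care is the first one, the identification of $V^mP_{\ker V^*}V^{*m}$ as the projection onto $V^m(\ker V^*)$, which rests on the orthogonality of the wandering subspaces $\{V^m(\ker V^*)\}_m$ — this is precisely the content built into the von Neumann–Wold decomposition (Theorem \ref{thm: Wold}) and can be quoted from there. Once these elementary facts about the summands are in place, both formulas follow from the standard fact that a monotone net of orthogonal projections converges strongly to the projection onto the closure of the union (resp. the intersection) of their ranges. I expect the proof to be short, essentially a careful unwinding of \eqref{eqn: rep of Wold dec}.
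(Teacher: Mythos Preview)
Your proposal is correct and matches the paper's approach exactly: the paper does not write out a detailed proof but simply states that the lemma ``follows from representations of unitary and shift parts of isometries as in \eqref{eqn: rep of Wold dec},'' which is precisely the unwinding you describe. Your argument fills in the standard details (that $V^m P_{\ker V^*}V^{*m}$ is the projection onto $V^m(\ker V^*)$, that $V^mV^{*m}$ is the projection onto $V^m\clh$, and that monotone sequences of projections converge strongly to the projection onto the union/intersection of ranges) in the way the paper clearly intends.
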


Let $V = (V_1, \ldots, V_n)$ be a tuple of doubly commuting isometries on $\clh$ (see \eqref{eqn: dc def}). By Lemma \ref{lem: iso and proj}, it follows that
\[
P_{\clh_{V_i, u}} = \text{SOT}-\lim_{m \raro \infty}V_i^m V_i^{*m} \qquad (i\in I_n).
\]
In particular, if $h \in \clh$ and $i \in I_n$, then $h \in \clh_{V_i,u}$ if and only if $P_{\clh_{V_i, u}} h = h$, whereas $h \in \clh_{V_i, s}$ if and only if $P_{\clh_{V_i, u}} h = 0$. Clearly, $\clh_{V_i, u}$ reduces $V_i$ for all $i \in I_n$. Moreover, for all $i \neq j$ in $I_n$, we have
\[(V^m_i V^{*m}_i) V_j = V_j (V^m_i V^{*m}_i)
\]
that is
\[
P_{\clh_{V_i, u}} V_j = V_j P_{\clh_{V_i, u}}.
\]
This implies immediately that $\clh_{V_i,u}$ reduces $V_j$ for all $i, j \in I_n$. Theorem \ref{thm: class of decom} then implies that $V$ admits von Neumann-Wold decomposition. This assertion was observed earlier in \cite{S}. The assertion also follows from Corollary \ref{cor: dti admits od}.

It is now an interesting problem to represent the direct summands $\clh_A$ of the von Neumann-Wold decomposition as described in \eqref{eqn: rep of Wold dec H A}. The answer is not clear in this generality and even it is unclear what conditions we would need to impose to get concrete representations of $\clh_A$. In the following section, we will discuss this in the setting of doubly twisted isometries.

\newsection{Doubly twisted isometries}\label{sec: ortho dec}

In this section, we deal with doubly twisted isometries (see Definition \ref{def: doub twist}). The goal here is to recover, as an application of Theorem \ref{thm: class of decom}, the existence of the von Neumann-Wold decomposition for doubly twisted isometries along with representations of $\clh_A$'s as described in \eqref{eqn: rep of Wold dec H A}. Needless to say, the main results of this section are not new (cf. \cite{RSS}), but the techniques involved are more algebraic. We believe that the present approach has more potential in other general frameworks.

Let $(V_1, \ldots, V_n)$ be a doubly twisted isometry on $\clh$. We know that
\begin{equation}\label{eqn: doubly twisted cond}
V_i^* V_j = U_{ij}^* V_j V_i^*, \text{ and } V_i V_j = U_{ij} V_j V_i,
\end{equation}
for all $i \neq j$ and $i, j \in I_n$. For each $i \neq j$, we then have
\begin{equation}\label{eqn: doubly twisted Uij}
U_{ij} = V_i^* V_j^* V_i V_j,
\end{equation}
and
\[
\begin{split}
V_i(V_jV_j^*)&  = U_{ij} V_j V_i V_j^* = U_{ij}V_j (U_{ij}^* V_j^* V_i) = (V_j V_j^*) V_i.
\end{split}
\]
In the above, we have used the fact that $V_p \in \{U_{st}\}_{s<t}'$ for all $p \in I_n$, and $U_{st}^* = U_{ts}$, $t > s$. Therefore
\begin{equation}\label{eqn: Vi VjVj*}
V_i (V_j V_j^*) = (V_j V_j^*) V_i \qquad (i \neq j).
\end{equation}
Let $\cln_{\emptyset} = \clh$, and let
\[
\cln_A = \bigcap_{i \in A} \ker V_i^* \qquad (A \neq \emptyset).
\]
Also set $\cln_i = \cln_{\{i\}} = \ker V_i^*$ for all $i \in I_n$. In view of the above observation, we have
\begin{equation}\label{eqn: ViVi* comm VjVj*}
(V_iV_i^*) (V_j V_j^*) = (V_j V_j^*) (V_i V_i^*) \qquad (i \neq j).
\end{equation}
Since $I - V_iV_i^* = P_{\cln_i}$ for all $i \in I_n$, it follows that $\{P_{\cln_i}\}_{i \in I_n}$ is a family of commuting orthogonal projections. This implies
\[
P_{\cln_A} = \prod_{i \in A} P_{\cln_i} \qquad (A \subseteq I_n, A \neq \emptyset).
\]
Moreover, \eqref{eqn: Vi VjVj*} implies that $\cln_A$ reduces $V_j$ for all $j \notin A$. Since $V_i \in \{U_{st}\}_{s < t}'$, it follows that $(I - V_i V_i^*) U_{st} = U_{st} (I - V_i V_i^*)$ for all $i \in I_n$ and hence, by the factorization of $P_{\cln_A}$ above, we have that $\cln_A$ reduces $U_{st}$ and $U_{st} \cln_A = \cln_A$ for all $s \neq t$ and $A \subseteq I_n$. We summarize all these observations in the following lemma (also see Lemma 3.3 and Lemma 3.5 in \cite{RSS}).

\begin{lem}\label{lem: d twist basic lemma}
Let $(V_1, \ldots, V_n)$ be a doubly twisted isometry, and let $A \subseteq I_n$. Then the following statements hold:
\begin{enumerate}
\item $\{P_{\cln_i}\}_{i \in I_n}$ is a family of commuting orthogonal projections.
\item $P_{\cln_A} = \prod_{i \in A} P_{\cln_i}$, $A \neq \emptyset$.
\item $\cln_A$ reduces $V_j$ for all $j \notin A$, and $A \neq I_n$.
\item $\cln_A$ reduces $U_{st}$ and $U_{st} \cln_A = \cln_A$ for all $s \neq t$.
\end{enumerate}
\end{lem}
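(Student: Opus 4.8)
The plan is to verify the four assertions in the order listed, since each one feeds into the next, essentially transcribing the chain of computations that precede the statement in the text. The whole argument rests on one algebraic identity: for a doubly twisted isometry, $V_i V_j V_j^* = (V_j V_j^*) V_i$ for $i \neq j$. I would derive this first. Starting from the defining relations $V_i V_j = U_{ij} V_j V_i$ and $V_i^* V_j = U_{ij}^* V_j V_i^*$, and using that every $V_p$ commutes with every $U_{st}$ together with $U_{ij}^* = U_{ji}$, one computes
\[
V_i (V_j V_j^*) = U_{ij} V_j V_i V_j^* = U_{ij} V_j (U_{ij}^* V_j^* V_i) = V_j V_j^* V_i,
\]
which is equation \eqref{eqn: Vi VjVj*}. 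This is the only nontrivial manipulation; everything else is bookkeeping with projections.

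Once \eqref{eqn: Vi VjVj*} is in hand, part (1) follows by observing that $I - V_i V_i^* = P_{\cln_i}$ (because $V_i$ is an isometry, so $V_i V_i^*$ is the projection onto $\ran V_i = (\ker V_i^*)^\perp$), and then \eqref{eqn: Vi VjVj*} with the roles of $i$ and $j$ appropriately read off gives $(V_i V_i^*)(V_j V_j^*) = (V_j V_j^*)(V_i V_i^*)$, hence $P_{\cln_i} P_{\cln_j} = P_{\cln_j} P_{\cln_i}$. Part (2) is then the standard fact that a product of commuting orthogonal projections is the orthogonal projection onto the intersection of their ranges; since $\cln_A = \bigcap_{i \in A} \ker V_i^* = \bigcap_{i \in A} \ran P_{\cln_i}$, we get $P_{\cln_A} = \prod_{i \in A} P_{\cln_i}$ for nonempty $A$ (the order in the product is immaterial by part (1)).

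For part (3), fix $j \notin A$. From \eqref{eqn: Vi VjVj*} applied with the pair $(j, i)$ for each $i \in A$, we have $V_j$ commutes with $V_i V_i^* = I - P_{\cln_i}$, hence with $P_{\cln_i}$; using the factorization in part (2), $V_j$ commutes with $P_{\cln_A}$, which says exactly that $\cln_A$ reduces $V_j$. (When $A = \emptyset$ this is vacuous since $\cln_\emptyset = \clh$; the restriction $A \neq I_n$ in the statement just guarantees there is some $j \notin A$ to speak of.) For part (4), since each $V_i \in \{U_{st}\}_{s<t}'$, the unitary $U_{st}$ commutes with $V_i V_i^*$ and hence with $P_{\cln_i}$, so by the factorization it commutes with $P_{\cln_A}$; this gives that $\cln_A$ reduces $U_{st}$. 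Finally, $U_{st} \cln_A = \cln_A$ because $U_{st}$ is a unitary commuting with $P_{\cln_A}$, so it maps $\ran P_{\cln_A}$ bijectively onto itself.

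I do not anticipate a genuine obstacle here — the lemma is a packaging of routine observations — but the one point that requires a little care is making sure the commutation relation \eqref{eqn: Vi VjVj*} is correctly symmetric in $i$ and $j$ so that parts (1) and (3) both go through; one must apply the identity in the right order ($V_j$ commuting with $V_i V_i^*$ for part (3), $V_i V_i^*$ commuting with $V_j V_j^*$ for part (1)), and note that \eqref{eqn: Vi VjVj*} as derived already holds for all ordered pairs $i \neq j$, so both readings are available.
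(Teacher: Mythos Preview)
Your proposal is correct and follows essentially the same approach as the paper: the lemma is stated as a summary of the computations carried out in the preceding paragraph, and your argument reproduces that chain exactly---derive \eqref{eqn: Vi VjVj*}, deduce that the $P_{\cln_i}$ commute, factor $P_{\cln_A}$, and read off the reducing properties from commutation with $V_j$ and $U_{st}$.
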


We are now ready for the second application of Theorem \ref{thm: class of decom}:

\begin{cor}\label{cor: dti admits od}
Doubly twisted isometries admit von Neumann-Wold decomposition.
\end{cor}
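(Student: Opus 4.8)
The plan is to verify condition (2) of Theorem~\ref{thm: class of decom}, namely that $\clh_{V_i, u}$ reduces $V_j$ for all $i, j \in I_n$; the conclusion then follows immediately. The key tool is the projection formula $P_{\clh_{V_i, u}} = \text{SOT}-\lim_{m \to \infty} V_i^m V_i^{*m}$ from Lemma~\ref{lem: iso and proj}(2), so it suffices to show that each $V_i^m V_i^{*m}$ commutes with $V_j$ for $i \neq j$ (the case $i = j$ being trivial), since strong limits of operators commuting with $V_j$ again commute with $V_j$.

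First I would establish that $V_i V_i^* = I - P_{\cln_i}$ commutes with $V_j$ for $i \neq j$. This is exactly \eqref{eqn: Vi VjVj*}, which was derived above from the twist relations together with the hypothesis $V_p \in \{U_{st}\}_{s<t}'$: writing $V_i(V_j V_j^*) = U_{ij} V_j V_i V_j^* = U_{ij} V_j (U_{ij}^* V_j^* V_i) = (V_j V_j^*) V_i$. Then I would iterate: since $V_i V_i^*$ commutes with $V_j$, so does every power $(V_i V_i^*)^m$, and because $V_i$ is an isometry we have $(V_i V_i^*)^m = V_i^m V_i^{*m}$ (as $V_i^* V_i = I$). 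Hence $V_i^m V_i^{*m}$ commutes with $V_j$ for every $m \in \Z_+$. Passing to the SOT-limit gives $P_{\clh_{V_i, u}} V_j = V_j P_{\clh_{V_i, u}}$, i.e. $\clh_{V_i, u}$ reduces $V_j$ for all $i, j \in I_n$. By Theorem~\ref{thm: class of decom}, $V$ admits a von Neumann-Wold decomposition.

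Finally, I would record the explicit form of the summands via \eqref{eqn: rep of Wold dec H A}, namely $\clh_A = \big[\bigcap_{i \in A} \clh_{V_i, s}\big] \cap \big[\bigcap_{j \in A^c} \clh_{V_j, u}\big]$, and note for later use that each $\clh_A$ reduces the twist $\{U_{st}\}_{s<t}$ as well: this follows from $V_i \in \{U_{st}\}_{s<t}'$ (hence $P_{\clh_{V_i,u}} \in \{U_{st}\}_{s<t}'$ as a strong limit of $V_i^m V_i^{*m}$) together with Lemma~\ref{lem: d twist basic lemma}. Since there is essentially no obstacle here beyond the routine interplay of the twist relations with the projection formula, the main point to be careful about is simply the commutation identity $(V_i V_i^*)^m = V_i^m V_i^{*m}$ and the fact that the commutant of a fixed operator is SOT-closed; both are standard. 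The real content has already been front-loaded into Lemma~\ref{lem: iso and proj} and the derivation of \eqref{eqn: Vi VjVj*}.
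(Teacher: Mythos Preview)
Your proof is correct and follows the same overall strategy as the paper: verify condition~(2) of Theorem~\ref{thm: class of decom} and conclude. The execution differs only in flavor: the paper works directly with the set $\clh_{V_j,u}=\bigcap_m V_j^m\clh$ and checks $V_i\clh_{V_j,u}\subseteq\clh_{V_j,u}$ (and likewise for $V_i^*$) using $V_iV_j^m=V_j^mV_iU_{ij}^m$, whereas you commute $V_j$ with the projections $V_i^mV_i^{*m}=(V_iV_i^*)^m$ via \eqref{eqn: Vi VjVj*} and pass to the SOT-limit---this is precisely the argument the paper gives at the end of Section~\ref{sec: charact wold decom} for the doubly \emph{commuting} case, transplanted to the doubly twisted setting by invoking \eqref{eqn: Vi VjVj*}; it is a clean shortcut and arguably more systematic, though neither route offers any real advantage over the other.
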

\begin{proof}
Let $(V_1, \ldots, V_n)$ be a doubly twisted isometry on $\clh$. Fix $i, j \in I_n$, and suppose $i \neq j$. Since $V_i V_j^m = V_j^m V_i U_{ij}^m$ for all $m \in \Z_+$, and $U_{ij} \clh = \clh$ and $V_i \clh \subseteq \clh$, it follows that
\[
V_i \clh_{V_j, u} = V_i (\bigcap_{m \in \Z_+} V^m_j \clh )
=  \bigcap_{m \in \Z_+} V^m_j V_i U^m_{ij} \clh
\subseteq  \bigcap_{m \in \Z_+} V^m_j  \clh
\]
that is, $V_i \clh_{V_j, u} \subseteq \clh_{V_j, u}$. Similarly, $V_i^* \clh_{V_j, u} \subseteq \clh_{V_j, u}$. This proves that $\clh_{V_j, u}$ reduces $V_i$. Finally, the fact that $\clh_{V_i, u}$ reduces $V_i$ follows from the von Neumann-Wold decomposition of $V_i$. Thus the assertion follows from Theorem \ref{thm: class of decom}.
\end{proof}

Now we turn to the problem of representations of the direct summands of the von Neumann-Wold decomposition. First, observe that Lemma \ref{lem: d twist basic lemma} and part (2) of Lemma \ref{lem: iso and proj} implies:

\begin{lem} \label{commutativity 1}
Let $(V_1, \ldots, V_n)$ be a doubly twisted isometry, and let $A \subsetneqq I_n$ be a nonempty subset. Then $P_{\clh_{V_j, s}}, P_{\clh_{V_j, u}} \in \{P_{\cln_A}\}'$ for all $j \in A^c$.
\end{lem}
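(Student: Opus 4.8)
The plan is to establish the commutation $P_{\clh_{V_j,s}} P_{\cln_A} = P_{\cln_A} P_{\clh_{V_j,s}}$ (and the analogous statement for $P_{\clh_{V_j,u}}$) by reducing everything to commutations involving the single-variable projections $P_{\cln_i}$, $i \in A$, which we already know commute with each other and, by Lemma~\ref{lem: d twist basic lemma}(3)--(4), interact nicely with $V_j$ and the twist when $j \notin A$. First I would fix $j \in A^c$. Since $A$ is nonempty and $A \subseteq A \cup\{j\} \subsetneqq I_n$ forces $j\neq i$ for every $i\in A$, Lemma~\ref{lem: d twist basic lemma}(3) applies: $\cln_A$ reduces $V_j$, hence $P_{\cln_A}$ commutes with both $V_j$ and $V_j^*$. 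Consequently $P_{\cln_A}$ commutes with $V_j^m V_j^{*m}$ for every $m$, and passing to the SOT limit via Lemma~\ref{lem: iso and proj}(2) gives $P_{\cln_A} P_{\clh_{V_j,u}} = P_{\clh_{V_j,u}} P_{\cln_A}$. Then $P_{\clh_{V_j,s}} = I - P_{\clh_{V_j,u}}$ commutes with $P_{\cln_A}$ as well, which is the claim.

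The only subtlety worth spelling out is the reduction to the single-variable case, so that Lemma~\ref{lem: d twist basic lemma}(3) can be invoked cleanly. By Lemma~\ref{lem: d twist basic lemma}(2) we have $P_{\cln_A} = \prod_{i\in A} P_{\cln_i}$, a product of commuting projections. It therefore suffices to show each factor $P_{\cln_i}$ ($i\in A$) commutes with $V_j$ and $V_j^*$; since $i \in A$ and $j \in A^c$ we have $i\neq j$, and $P_{\cln_i} = I - V_iV_i^*$, so the required commutation $P_{\cln_i} V_j = V_j P_{\cln_i}$ is exactly \eqref{eqn: Vi VjVj*}, i.e. $V_i(V_jV_j^*) = (V_jV_j^*)V_i$ rearranged; taking adjoints handles $V_j^*$. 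Thus $P_{\cln_A}$, being a product of operators each commuting with $V_j$ and $V_j^*$, commutes with $V_j$ and $V_j^*$.

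With that in hand the argument is short: for each $m\in\Z_+$, $P_{\cln_A} (V_j^m V_j^{*m}) = (V_j^m V_j^{*m}) P_{\cln_A}$, and since SOT-$\lim_m V_j^m V_j^{*m} = P_{\clh_{V_j,u}}$ (Lemma~\ref{lem: iso and proj}(2)) while left and right multiplication by the fixed bounded operator $P_{\cln_A}$ are SOT-continuous, we get $P_{\cln_A} P_{\clh_{V_j,u}} = P_{\clh_{V_j,u}} P_{\cln_A}$, hence also $P_{\cln_A} P_{\clh_{V_j,s}} = P_{\clh_{V_j,s}} P_{\cln_A}$. There is no real obstacle here; the statement is essentially bookkeeping on top of \eqref{eqn: Vi VjVj*} and Lemma~\ref{lem: iso and proj}. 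The one place to be careful is to make sure the hypothesis ``$A \subsetneqq I_n$ nonempty, $j\in A^c$'' is genuinely used — it guarantees $i \neq j$ for all $i\in A$, which is what \eqref{eqn: Vi VjVj*} needs — and that one does not accidentally try to commute $P_{\cln_A}$ past $V_i$ for $i\in A$, which is false in general.
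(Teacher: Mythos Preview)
Your proposal is correct and follows exactly the approach the paper intends: the paper states the lemma as an immediate consequence of Lemma~\ref{lem: d twist basic lemma} and part (2) of Lemma~\ref{lem: iso and proj}, and you have simply spelled out that implication in detail. One tiny remark: when you invoke \eqref{eqn: Vi VjVj*} to get $P_{\cln_i}V_j = V_j P_{\cln_i}$, what you actually need is $(V_iV_i^*)V_j = V_j(V_iV_i^*)$, which is \eqref{eqn: Vi VjVj*} with the roles of $i$ and $j$ swapped (valid since the identity holds for all $i\neq j$); but in any case this detour is unnecessary, since Lemma~\ref{lem: d twist basic lemma}(3) already gives directly that $\cln_A$ reduces $V_j$, and you noted this yourself at the outset.
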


We then have the commutativity of orthogonal projections:

\begin{lem} \label{Commutativity}
Let $(V_1, \ldots, V_n)$ be a doubly twisted isometry. Then $\{P_{\clh_{V_i},s}, P_{\clh_{V_j},u}\}_{i, j \in I_n}$ is a family of commuting orthogonal projections.
\end{lem}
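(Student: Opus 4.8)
The plan is to deduce the full commutativity statement of Lemma~\ref{Commutativity} from the already-established partial commutativity of Lemma~\ref{commutativity 1} together with the basic structural facts in Lemma~\ref{lem: d twist basic lemma}. The key point is that $P_{\clh_{V_j,s}}$ and $P_{\clh_{V_j,u}}$ are expressible, via Lemma~\ref{lem: iso and proj}, as strong limits (or strong sums) of polynomials in $V_j, V_j^*$ and $P_{\ker V_j^*} = P_{\cln_j}$. So it suffices to show, for fixed $i \neq j$, that the generators $V_i$, $V_i^*$, $P_{\cln_i}$ of the algebra controlling $P_{\clh_{V_i,s}}$ and $P_{\clh_{V_i,u}}$ all commute with the generators $V_j$, $V_j^*$, $P_{\cln_j}$ of the algebra controlling $P_{\clh_{V_j,s}}$ and $P_{\clh_{V_j,u}}$. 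The case $i=j$ is immediate since $\clh = \clh_{V_i,s}\oplus\clh_{V_i,u}$ is an orthogonal reducing decomposition for $V_i$, so $P_{\clh_{V_i,s}}$ and $P_{\clh_{V_i,u}} = I - P_{\clh_{V_i,s}}$ commute.

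First I would record what is already available. From \eqref{eqn: ViVi* comm VjVj*} (equivalently Lemma~\ref{lem: d twist basic lemma}(1)), $P_{\cln_i} = I - V_iV_i^*$ commutes with $V_jV_j^*$, i.e. $P_{\cln_i}$ commutes with $P_{\clh_{V_j,u}} = \mathrm{SOT}\text{-}\lim_m V_j^m V_j^{*m}$ — indeed $P_{\cln_i}$ commutes with every $V_j^m V_j^{*m}$ by iterating \eqref{eqn: Vi VjVj*} and its adjoint, and strong limits preserve commutation with a fixed bounded operator. Hence $P_{\cln_i}$ commutes with $P_{\clh_{V_j,s}} = I - P_{\clh_{V_j,u}}$ as well. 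Next, the twist relations \eqref{eqn: doubly twisted cond} give $V_i (V_jV_j^*) = (V_jV_j^*)V_i$ for $i\neq j$ (this is \eqref{eqn: Vi VjVj*}), and likewise $V_i^*(V_jV_j^*) = (V_jV_j^*)V_i^*$; iterating and passing to the strong limit shows $V_i$ and $V_i^*$ commute with $P_{\clh_{V_j,u}}$, hence with $P_{\clh_{V_j,s}}$. Combining: $P_{\clh_{V_i,s}}$ and $P_{\clh_{V_i,u}}$, being strong limits of polynomials in $V_i, V_i^*, P_{\cln_i}$ (Lemma~\ref{lem: iso and proj}), each commute with $P_{\clh_{V_j,u}}$ and $P_{\clh_{V_j,s}}$. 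This handles the pair $(P_{\clh_{V_i,s}}, P_{\clh_{V_j,u}})$ and, symmetrically in $i,j$, also $(P_{\clh_{V_i,s}}, P_{\clh_{V_j,s}})$ and $(P_{\clh_{V_i,u}}, P_{\clh_{V_j,u}})$, which is exactly the assertion.

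Alternatively — and perhaps more cleanly — one can use Lemma~\ref{commutativity 1} directly: for $j \in A^c$ we have $P_{\clh_{V_j,s}}, P_{\clh_{V_j,u}} \in \{P_{\cln_A}\}'$; applying the SOT formula of Lemma~\ref{lem: iso and proj}(1) for $P_{\clh_{V_i,s}}$ in terms of $V_i^m P_{\cln_i} V_i^{*m}$, and noting $V_i$ commutes with $P_{\clh_{V_j,u}}$ (as above) while $P_{\cln_i} = P_{\cln_{\{i\}}}$ falls under Lemma~\ref{commutativity 1} with $A = \{i\}$, one gets term-by-term commutation and then passes to the limit. Either way, the main obstacle is purely a matter of bookkeeping: justifying that ``commutes with each term of a strongly convergent series/limit'' upgrades to ``commutes with the limit'', which is standard since for fixed $T$ the maps $S \mapsto TS$ and $S \mapsto ST$ are SOT-continuous on bounded sets. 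I expect no genuine difficulty here; the substance is entirely in Lemmas~\ref{lem: d twist basic lemma}, \ref{lem: iso and proj}, and \ref{commutativity 1}, and this lemma is a formal consequence.
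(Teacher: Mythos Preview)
Your proposal is correct and matches the paper's proof essentially line for line: the paper expands $P_{\clh_{V_i,s}}$ via Lemma~\ref{lem: iso and proj}(1), uses that $\clh_{V_j,s}$ reduces $V_i$ to pull $P_{\clh_{V_j,s}}$ past the $V_i^m$ and $V_i^{*m}$ factors, and invokes Lemma~\ref{commutativity 1} with $A=\{i\}$ to commute it past $P_{\cln_i}$ --- exactly your ``alternative'' paragraph. Your first paragraph is just a slightly more explicit unpacking of why $\clh_{V_j,s}$ (equivalently $\clh_{V_j,u}$) reduces $V_i$, via \eqref{eqn: Vi VjVj*} and SOT-continuity, which the paper takes for granted from the proof of Corollary~\ref{cor: dti admits od}.
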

\begin{proof}
Clearly, $P_{\clh_{V_i,s}} P_{\clh_{V_i,u}} = 0 = P_{\clh_{V_i,u}} P_{\clh_{V_i,s}}$ for all $i \in I_n$. Therefore, assume that $i \neq j$. Lemma \ref{lem: iso and proj} and Lemma \ref{commutativity 1} then imply
\[
\begin{split}
P_{\clh_{V_i,s}} P_{\clh_{V_j,s}}
& = (\sum_{m \in \Z_+} V_i^m P_{\cln_i} V^{*m}_i) P_{\clh_{V_j,s}}
\\
& = P_{\clh_{V_j,s}} (\sum_{m \in \Z_+} V_i^m P_{\cln_i} V^{*m}_i)
\\
& = P_{\clh_{V_j,s}} P_{\clh_{V_i,s}}.
\end{split}
\]
In the above, we have also used the fact that $\clh_{V_j, s}$ reduces $V_i$. A similar computation then yields $P_{\clh_{V_i,s}} P_{\clh_{V_j,u}} = P_{\clh_{V_j,u}} P_{\clh_{V_i,s}}$ and $P_{\clh_{V_i,u}} P_{\clh_{V_j,u}} = P_{\clh_{V_j,u}} P_{\clh_{V_i,u}}$, and completes the proof of the lemma.
\end{proof}

Similar technique applies to the splitting of product of isometries, co-isometries, and orthogonal projections. For each $m \geq 1$, we denote by $k = (k_1, \ldots, k_m)$ the multi-index in $\Z_+^m$.

\begin{lem}\label{Simplify}
Let $(V_1, \ldots, V_n)$ be a doubly twisted isometry, $A \subseteq I_n$, $A \neq \emptyset$, and let $k \in \Z_+^{|A|}$. Then
\[
\prod_{i \in A} \Big(V_i^{k_i} P_{\cln_i} V_i^{*k_i}
\Big) = \Big(\prod_{i \in A} V_i^{k_i} \Big) P_{\cln_A} \Big(\prod_{i \in A} V_i^{ k_i} \Big)^*.
\]
\end{lem}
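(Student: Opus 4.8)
The plan is to induct on $|A|$, peeling off the largest index of $A$ at each stage. The case $|A|=1$ is the defining identity. For the inductive step, write $A=B\cup\{p\}$ with $p=\max A$ and $B=A\setminus\{p\}$, and put $V_B=\prod_{i\in B}V_i^{k_i}$ (say, in increasing order of index), so that $\prod_{i\in A}V_i^{k_i}=V_BV_p^{k_p}$. By the inductive hypothesis $\prod_{i\in B}(V_i^{k_i}P_{\cln_i}V_i^{*k_i})=V_BP_{\cln_B}V_B^{*}$, so the lemma for $A$ reduces to the claim
\[
V_BP_{\cln_B}V_B^{*}\big(V_p^{k_p}P_{\cln_p}V_p^{*k_p}\big)=V_BV_p^{k_p}P_{\cln_A}V_p^{*k_p}V_B^{*}.
\]

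The whole argument rests on one commutation fact, which I would record first: \emph{for $i\neq j$ and every $a\in\Z_+$, the operator $V_i^aV_i^{*a}$ commutes with $V_j$ and with $V_j^{*}$.} This is proved exactly as \eqref{eqn: Vi VjVj*}, only with higher powers: iterating $V_iV_j=U_{ij}V_jV_i$ and $V_i^{*}V_j=U_{ij}^{*}V_jV_i^{*}$ and using $V_p\in\{U_{st}\}'$ gives $V_i^aV_j=U_{ij}^aV_jV_i^a$ and $V_i^{*a}V_j=U_{ij}^{*a}V_jV_i^{*a}$, whence $V_i^aV_i^{*a}V_j=U_{ij}^{*a}U_{ij}^{a}V_jV_i^aV_i^{*a}=V_jV_i^aV_i^{*a}$; the statement for $V_j^{*}$ follows by taking adjoints. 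Consequently, for $i\neq j$ the block $V_i^{k_i}P_{\cln_i}V_i^{*k_i}=V_i^{k_i}V_i^{*k_i}-V_i^{k_i+1}V_i^{*(k_i+1)}$ commutes with $V_j$, with $V_j^{*}$, with $P_{\cln_j}=I-V_jV_j^{*}$, and with $V_j^{k_j}P_{\cln_j}V_j^{*k_j}$; in particular the factors on the left-hand side of the lemma pairwise commute (so no ordering convention is needed there), and $P_{\cln_B}=\prod_{i\in B}P_{\cln_i}$ commutes with $V_p^{k_p}$ and $V_p^{*k_p}$ (this also follows from Lemma \ref{lem: d twist basic lemma}(3)).

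With these in hand I would finish by manipulating the right-hand side of the displayed claim. Using $P_{\cln_A}=P_{\cln_p}P_{\cln_B}$ (Lemma \ref{lem: d twist basic lemma}(2), the projections commuting by part (1)) and the fact that $P_{\cln_B}$ passes through $V_p^{*k_p}$, rewrite $V_p^{k_p}P_{\cln_A}V_p^{*k_p}=\big(V_p^{k_p}P_{\cln_p}V_p^{*k_p}\big)P_{\cln_B}$. Now the block $V_p^{k_p}P_{\cln_p}V_p^{*k_p}$ commutes with every $V_i$ and $V_i^{*}$ for $i\in B$ (the commutation fact with the roles of $i$ and $j$ interchanged, applied to the difference), hence with $V_B$, with $V_B^{*}$, and with $P_{\cln_B}$; sliding it past $P_{\cln_B}V_B^{*}$ turns the right-hand side into $V_BP_{\cln_B}V_B^{*}\big(V_p^{k_p}P_{\cln_p}V_p^{*k_p}\big)$, which is the left-hand side. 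I would also remark that the right-hand side of the lemma is independent of the order in which $\prod_{i\in A}V_i^{k_i}$ is written, since two orderings differ by a product of the unitaries $U_{st}$, and these commute with every $V_i$ and, by Lemma \ref{lem: d twist basic lemma}(4), with $P_{\cln_A}$.

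No serious obstacle is expected; the content is purely bookkeeping of commutation relations. The one point requiring care is that $P_{\cln_i}=I-V_iV_i^{*}$ does \emph{not} commute with $V_i$, so one may never split $P_{\cln_p}$ off from the block $V_p^{k_p}P_{\cln_p}V_p^{*k_p}$: that block must be transported as a single unit, using only the ``cross'' commutations ($i\neq j$) supplied by the fact above. Keeping track of which moves are legitimate — and peeling off exactly one index ($\max A$) at a time — is the whole of the work.
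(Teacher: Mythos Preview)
Your proof is correct and follows the same inductive scheme as the paper: verify the two-index step using the doubly twisted relations, then induct on $|A|$. The only difference is packaging: the paper tracks the unitary factors $U_{ij}^{\pm k_ik_j}$ explicitly through the computation (via $V_i^{*p}V_j^q=U_{ij}^{*pq}V_j^qV_i^{*p}$) and watches them cancel at the end, whereas you first isolate the single fact that the self-adjoint block $V_i^aV_i^{*a}$ commutes with $V_j$ and $V_j^{*}$ for $i\neq j$ --- in which the $U_{ij}$'s have already cancelled --- so the inductive step becomes pure rearrangement with no twist in sight. Your difference-of-range-projections identity $V_i^{k_i}P_{\cln_i}V_i^{*k_i}=V_i^{k_i}V_i^{*k_i}-V_i^{k_i+1}V_i^{*(k_i+1)}$ makes this particularly clean and also explains, conceptually, why no $U_{ij}$ can survive in the statement of the lemma.
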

\begin{proof}
Let $i, j \in I_n$, and suppose $i \neq j$. Note that (see \eqref{eqn: doubly twisted cond})
\[
V_i^{*p} V_j^q = U_{ij}^{*pq} V_j^q V_i^{*p} \qquad (p,q \in \Z_+).
\]
Since $U_{ij} \in \{V_i, V_j\}'$ and $\cln_s = \ker V_s^*$ reduces $V_t$ for all $t \neq s$ (see part (3) of Lemma \ref{lem: d twist basic lemma}), for each $k_i, k_j \in \Z_+$, it follows that
\[
\begin{split}
(V_i^{k_i} P_{\cln_i} V_i^{*k_i}) (V_j^{k_j} P_{\cln_j} V_j^{*k_j}) & = (U_{ij}^{*k_ik_j}) V_i^{k_i} P_{\cln_i}  V_j^{k_j} V_i^{*k_i} P_{\cln_j} V_j^{*k_j}
\\
& = (U_{ij}^{*k_ik_j}) V_i^{k_i} V_j^{k_j} P_{\cln_i} P_{\cln_j} V_i^{*k_i} V_j^{*k_j}
\\
& = (U_{ij}^{*k_ik_j}) (V_i^{k_i} V_j^{k_j}) P_{\cln_{\{i,j\}}} (V_i^{k_i} V_j^{k_j})^* (U_{ij}^{k_ik_j})
\\
& = (V_i^{k_i} V_j^{k_j}) P_{\cln_{\{i,j\}}} (V_i^{k_i} V_j^{k_j})^*.
\end{split}
\]
This verifies the conclusion for $A = \{i,j\}$. The lemma now follows by induction.
\end{proof}

We set the following convention. Given an $n$-tuple of bounded linear operators $T = (T_1, \ldots, T_n)$ on $\clh$ and $k = (k_1, \ldots, k_n) \in \Z_+^n$, we define $T^k$ by
\[
T^k:= T_{1}^{k_1}\cdots T_{n}^{k_n}.
\]
Now we discuss the unitary part of doubly twisted isometries:

\begin{lem}\label{twisted intersection}
Let $V = (V_1, \ldots, V_n)$ be a doubly twisted isometry and let $\cls$ reduces $V$. Then
\[
\underset{k \in \mathbb{Z}_+^n}{\bigcap} V^k \cls = \underset{i \in I_n}{\bigcap} \underset{k_i \in \mathbb{Z}_+}{\bigcap}
V_i^{k_i} \cls.
\]
\end{lem}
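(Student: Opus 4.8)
The plan is to collapse both intersections onto a single concrete subspace built from the unitary parts of the individual $V_i$. The inclusion $\bigcap_{k\in\Z_+^n}V^k\cls \subseteq \bigcap_{i\in I_n}\bigcap_{k_i\in\Z_+}V_i^{k_i}\cls$ is immediate from the convention $V^k = V_1^{k_1}\cdots V_n^{k_n}$: for a fixed $i$ and $k_i\in\Z_+$, taking the multi-index $k\in\Z_+^n$ supported only on the $i$-th coordinate gives $V^k = V_i^{k_i}$, so any vector in the left-hand side lies in $V_i^{k_i}\cls$.

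For the reverse inclusion, I would first record a purely elementary identity valid for \emph{any} isometry $V_i$ and any $V_i$-reducing subspace $\cls$, namely
\[
\bigcap_{k_i\in\Z_+}V_i^{k_i}\cls = \cls\cap\clh_{V_i, u}.
\]
Here the inclusion $\subseteq$ holds because $V_i^{k_i}\cls\subseteq V_i^{k_i}\clh$ and $\cls = V_i^{0}\cls$; the inclusion $\supseteq$ holds because if $h\in\cls\cap\clh_{V_i, u}$ then $h = V_i^{k_i}\big(V_i^{*k_i}h\big)$ with $V_i^{*k_i}h\in\cls$, since $\cls$ reduces $V_i$. Applying this for each $i\in I_n$ shows that the right-hand side of the lemma equals $\clm := \cls\cap\bigcap_{i\in I_n}\clh_{V_i, u}$.

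The substantive step is then to check that $\clm$ reduces every $V_j$, and this is exactly where the doubly twisted hypothesis is used: by Theorem \ref{thm: class of decom} together with Corollary \ref{cor: dti admits od}, each $\clh_{V_i, u}$ reduces every $V_j$, while $\cls$ reduces every $V_j$ by hypothesis, so the intersection $\clm$ does too. Consequently, for each $j$ and each $h\in\clm\subseteq\clh_{V_j, u}$ we have $h = V_j\big(V_j^* h\big)$ with $V_j^* h\in\clm$, hence $\clm\subseteq V_j\clm$; iterating gives $\clm\subseteq V_j^{k_j}\clm$ for every $k_j\in\Z_+$, and then applying the operators $V_1^{k_1}, V_1^{k_1}V_2^{k_2}, \ldots$ successively yields $\clm\subseteq V_1^{k_1}\cdots V_n^{k_n}\clm = V^k\clm\subseteq V^k\cls$ for every $k\in\Z_+^n$. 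Therefore $\clm\subseteq\bigcap_{k\in\Z_+^n}V^k\cls$, which combined with the first paragraph gives $\bigcap_{k\in\Z_+^n}V^k\cls = \clm = \bigcap_{i\in I_n}\bigcap_{k_i\in\Z_+}V_i^{k_i}\cls$.

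The only non-formal ingredient is the joint reducibility of the subspaces $\clh_{V_i, u}$ — that the unitary part of each $V_i$ is invariant under all the $V_j$ — which is precisely the fact already extracted from the doubly twisted relations (via $V_iV_j^m = V_j^m V_i U_{ij}^m$ with $U_{ij}$ unitary) in the proof of Corollary \ref{cor: dti admits od}. Everything else is bookkeeping about ordered products of the $V_i$; notably, no commutation (or even twisted commutation) among the $V_i$ is needed to push $\clm$ through the ordered product $V^k$, so the argument goes through verbatim.
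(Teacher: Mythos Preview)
Your argument is correct, but it takes a different route from the paper's. The paper proves the case $n=2$ by a direct computation with the doubly twisted relation: given $y = V_1^{k_1}x_{k_1} = V_2^{k_2}x_{k_2}$ with $x_{k_1},x_{k_2}\in\cls$, it uses $V_1^{*k_1}V_2^{k_2} = V_2^{k_2}V_1^{*k_1}U^{*k_1k_2}$ to rewrite $x_{k_1} = V_2^{k_2}x$ for some $x\in\cls$, whence $y = V_1^{k_1}V_2^{k_2}x$; the general case is then left to induction. Your approach instead identifies both sides of the equality with the concrete subspace $\clm = \cls\cap\bigcap_{i\in I_n}\clh_{V_i,u}$, using the elementary identity $\bigcap_{k_i}V_i^{k_i}\cls = \cls\cap\clh_{V_i,u}$ for a $V_i$-reducing $\cls$, and then invokes Corollary~\ref{cor: dti admits od} (equivalently the reducibility conclusion in the proof of Theorem~\ref{thm: class of decom}) to see that $\clm$ reduces each $V_j$ and is contained in $\clh_{V_j,u}$, so $\clm = V_j\clm$ and hence $\clm = V^k\clm\subseteq V^k\cls$.

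What this buys: the paper's proof is self-contained at the level of the twisted commutation relation and does not appeal back to the classification theorem, at the cost of an induction on $n$. Your proof avoids induction entirely and is more structural; moreover, as you observe, once $\clh_{V_i,u}$ is known to reduce every $V_j$ the rest is pure isometry bookkeeping, so your argument in fact establishes the lemma for \emph{any} tuple of isometries satisfying the equivalent conditions of Theorem~\ref{thm: class of decom}, not just doubly twisted ones. There is no circularity, since Corollary~\ref{cor: dti admits od} is proved before this lemma and does not rely on it.
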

\begin{proof}
It suffices to prove the assertion for $n=2$; the general case then easily follows by induction. Let $V=(V_1, V_2)$ be a doubly twisted isometry and let $U$ be the corresponding twist. By \eqref{eqn: doubly twisted cond} and \eqref{eqn: doubly twisted Uij}, we know that $V_1 V_2 = U V_2 V_1$ and $U = V_1^*V_2^* V_1 V_2$. In particular, $\cls$ reduces $U$. It is now easy to see that
\[
\underset{k \in \mathbb{Z}_+^2}{\bigcap} V^k S \subseteq \underset{i \in I_2}{\bigcap} \underset{k_i \in \mathbb{Z}_+}{\bigcap} V_i^{k_i} \cls.
\]
For the reverse inclusion, let $y \in \cap_{k_1 \in \mathbb{Z}_+} V_1^{k_1} \cls \cap_{k_2 \in \mathbb{Z}_+} V_2^{k_2} \cls$. For each $k_1, k_2 \in \mathbb{Z}_+$, there exist $x_{k_1}$ and $x_{k_2}$ in $\cls$ such that
\[
y = V_1^{k_1} x_{k_1} = V_2^{k_2} x_{k_2}.
\]
Then
\[
x_{k_1} = V^{* k_1}_1 V^{k_2}_2 x_{k_2} =
V_2^{k_2} (V_1^{*k_1} U^{*k_1 k_2} x_{k_2}) = V_2^{k_2} x,
\]
where $x=V_1^{*k_1} U^{*k_1 k_2} x_{k_2} \in \cls$. Therefore, $y = V_1^{k_1} V_2^{k_2} x \in \cap_{k \in \mathbb{Z}_+^2} V^k \cls$, which completes the proof of the lemma.
\end{proof}

The following conventions will be in effect throughout: For an $n$-tuple of bounded linear operators $T = (T_1, \ldots, T_n)$ on $\clh$ and $A = \{m_1<\cdots <m_p\}\subseteq I_n$, we define
\[
T_A = (T_{m_1}, \ldots, T_{m_p}).
\]
For each $k = (k_1, \ldots, k_p) \in \Z_+^p$, we define $T_A^k$ by
\begin{equation}\label{eqn: def of TA}
T_A^k:= T_{m_1}^{k_1}\cdots T_{m_p}^{k_p}.
\end{equation}
Now we are ready to compute the closed subspaces $\clh_A$ in the direct summands \eqref{eqn: rep of Wold dec H A} of von Neumann-Wold decomposition of doubly twisted isometries.

\begin{thm}\label{thm: Wold for Un twisted}
Let $V = (V_1, \ldots, V_n)$ be a doubly twisted isometry. Then $V$ admits a von Neumann-Wold decomposition $\clh = \bigoplus_{A \subseteq I_n} \clh_A$, where
\[
\clh_A = \bigoplus_{k \in \Z_+^{|A|}} V_A^k \Big(\bigcap_{l \in \Z_+^{n-|A|}}V^{l}_{I_n \setminus A} \cln_A\Big) \qquad (A \subseteq I_n).
\]
\end{thm}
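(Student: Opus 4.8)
The plan is to start from the description of the direct summands that is already available: by Corollary \ref{cor: dti admits od} together with \eqref{eqn: rep of Wold dec H A}, the tuple $V$ admits a von Neumann-Wold decomposition $\clh = \bigoplus_{A\subseteq I_n}\clh_A$ with
\[
\clh_A = \Big[\bigcap_{i\in A}\clh_{V_i,s}\Big]\bigcap\Big[\bigcap_{j\in A^c}\clh_{V_j,u}\Big].
\]
It then suffices to identify this subspace with the stated expression. Fix $A\subseteq I_n$ and put $B = I_n\setminus A$ and $\clw_A := \bigcap_{l\in\Z_+^{|B|}}V_B^l\cln_A$.

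First I would compute the ``shift factor''. By Lemma \ref{lem: iso and proj}(1), $P_{\clh_{V_i,s}} = \text{SOT-}\sum_{m}V_i^{m}P_{\cln_i}V_i^{*m}$, and by Lemma \ref{Commutativity} these projections commute over $i\in A$. Multiplying them out term by term (legitimate since the summands are positive and commute) and applying Lemma \ref{Simplify} gives $P_{\bigcap_{i\in A}\clh_{V_i,s}} = \prod_{i\in A}P_{\clh_{V_i,s}} = \text{SOT-}\sum_{k\in\Z_+^{|A|}}V_A^k P_{\cln_A}(V_A^k)^*$. Since the left side is a projection and each summand $V_A^k P_{\cln_A}(V_A^k)^* = P_{V_A^k\cln_A}$ is a projection, the subspaces $V_A^k\cln_A$ are mutually orthogonal and $\bigcap_{i\in A}\clh_{V_i,s} = \bigoplus_{k\in\Z_+^{|A|}}V_A^k\cln_A$. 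For the ``unitary factor'', $V_B$ is again a doubly twisted isometry and $\clh$ trivially reduces it, so Lemma \ref{twisted intersection} yields $\bigcap_{j\in B}\clh_{V_j,u} = \bigcap_{j\in B}\bigcap_{l_j\in\Z_+}V_j^{l_j}\clh = \bigcap_{l\in\Z_+^{|B|}}V_B^l\clh$.

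It remains to prove $\big(\bigoplus_{k}V_A^k\cln_A\big)\cap\big(\bigcap_l V_B^l\clh\big) = \bigoplus_{k}V_A^k\clw_A$ (the right side is a legitimate orthogonal sum because each $V_A^k\clw_A\subseteq V_A^k\cln_A$). For ``$\supseteq$'': if $w\in\clw_A$, then for each $l$ write $w = V_B^l z$; pushing $V_A^k$ past $V_B^l$ via the twisted relations — the resulting twist factor, a product of the $U_{ij}$ with $i\in A$, $j\in B$, commutes with every $V_t$ and fixes $\clh$ — shows $V_A^k w\in V_B^l\clh$ for all $l$, so $V_A^k w$ lies in both factors. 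For ``$\subseteq$'': given $h$ in the intersection, write $h = \sum_k V_A^k w_k$ with $w_k = P_{\cln_A}(V_A^k)^* h\in\cln_A$ (using orthogonality of the $V_A^k\cln_A$); fixing $l$ and writing $h = V_B^l g$, move $(V_A^k)^*$ past $V_B^l$ using $V_i^*V_j = U_{ij}^*V_jV_i^*$, then use that $P_{\cln_A}$ commutes with $V_B^l$ (Lemma \ref{lem: d twist basic lemma}(3)) and with the twist factors (Lemma \ref{lem: d twist basic lemma}(4)), and that the twist factors fix $\cln_A$ and commute with the $V_j$; this exhibits $w_k$ as an element of $V_B^l\cln_A$ for every $l$, i.e. $w_k\in\clw_A$, whence $h\in\bigoplus_k V_A^k\clw_A$. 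Combining the three identities gives $\clh_A = \bigoplus_{k\in\Z_+^{|A|}}V_A^k\clw_A$, as claimed.

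The main obstacle is this last intersection computation: one must track the twist factors $U_{ij}$ carefully through the rearrangements and verify at each step that they commute with the relevant isometries and orthogonal projections and leave $\cln_A$ invariant — all of which follows from $V_t\in\{U_{st}\}'$, from $U_{ij}\cln_A = \cln_A$, and from the commutation relations recorded in Lemma \ref{lem: d twist basic lemma}. By contrast, the identifications $\bigcap_{i\in A}\clh_{V_i,s} = \bigoplus_k V_A^k\cln_A$ and $\bigcap_{j\in B}\clh_{V_j,u} = \bigcap_l V_B^l\clh$ are routine once the commutativity lemmas and Lemmas \ref{Simplify} and \ref{twisted intersection} are in hand.
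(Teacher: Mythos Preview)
Your proposal is correct and uses the same toolkit as the paper (Corollary \ref{cor: dti admits od}, Lemmas \ref{lem: iso and proj}, \ref{Commutativity}, \ref{Simplify}, \ref{twisted intersection}, and \ref{lem: d twist basic lemma}), but the final identification is organized differently. The paper never separates the shift and unitary factors as subspaces; instead it keeps the computation at the level of projections throughout: after writing $\prod_{i\in A}P_{\clh_{V_i,s}}=\sum_k V_A^k P_{\cln_A}(V_A^k)^*$, it commutes the remaining factors $\prod_{j\in A^c}P_{\clh_{V_j,u}}$ past $V_A^k$ and into the middle (using that $\clh_{V_j,u}$ reduces $V_i$ and Lemma \ref{commutativity 1}), then reads off the range of $\big(\prod_{j\in A^c}P_{\clh_{V_j,u}}\big)P_{\cln_A}$ directly via Lemma \ref{twisted intersection} applied to $\cls=\cln_A$. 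Your route instead first identifies $\bigcap_{i\in A}\clh_{V_i,s}=\bigoplus_k V_A^k\cln_A$ and $\bigcap_{j\in A^c}\clh_{V_j,u}=\bigcap_l V_B^l\clh$ separately, and then carries out an explicit element-wise intersection argument, tracking twist factors through $(V_A^k)^*V_B^l$ and $P_{\cln_A}$. Both are valid; the paper's version is a bit shorter because the projection calculus absorbs the intersection step you do by hand, while your version is arguably more transparent about where each $w_k$ actually lands.
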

\begin{proof}
Note, by Corollary \ref{cor: dti admits od}, we already know that doubly twisted isometries admit von Neumann-Wold decomposition. Therefore, we only have to prove the representation of $\clh_A$, where we know that (see \eqref{eqn: rep of Wold dec H A} in the proof of Theorem \ref{thm: class of decom})
\[
\clh_A = \Big(\bigcap_{i \in A} \clh_{V_i,s}\Big) \bigcap \Big(\bigcap_{j \in A^c} \clh_{V_j,u}\Big).
\]
Fix $A \subseteq I_n$, and suppose $A \neq \emptyset$. Lemma \ref{Commutativity} implies $\clh_A = \Big(\underset{i \in A}{\prod} P_{\clh_{V_i, s}}\Big) \Big(\underset{j \in A^c}{\prod} P_{\clh_{V_j, u}} \Big)\clh$. We compute
\[
\begin{split}
\prod_{i \in A} P_{\clh_{V_i,s}} & = \prod_{i \in A} \Big(\sum_{k_i \in \mathbb{Z}_+} V_i^{k_i} P_{\cln_i} V_i^{*k_i}\Big)
\\
& = \sum_{k \in \Z_+^{|A|}}\Big(\prod_{i \in A} V_i^{k_i} P_{\cln_i} V_i^{* k_i}\Big)
\\
& = \sum_{k \in \Z_+^{|A|}}\Big(\prod_{i \in A} V_i^{k_i}\Big) P_{\cln_A} \Big(\prod_{i \in A} V_i^{k_i}\Big)^*,
\end{split}
\]
where the last equality follows from Lemma \ref{Simplify}. If $A=I_n$, then the above equality gives desired representation of $\clh_{I_n}$. Suppose $A \subsetneq I_n$. By Lemma \ref{commutativity 1}, we know that $P_{\clh_{V_j, u}} P_{\cln_A} = P_{\cln_A} P_{\clh_{V_j,u}}$ for all $j \in A^c$. Also, since $P_{\clh_{V_j, u}} V_i = V_i P_{\clh_{V_j, u}}$ for all $i \in A$ and $j\in A^c$, we have
\[
\begin{split}
\Big(\prod_{i \in A} P_{\clh_{V_i,s}}\Big) \Big(\prod_{j \in A^c}P_{\clh_{V_j,u}} \Big) & = \Big(\sum_{k \in \Z_+^{|A|}} \Big(\prod_{i \in A} {V_{i}^{k_i}} \Big)
P_{\cln_A} \Big(\prod_{i\in A} V_{i}^{k_i} \Big)^{*} \Big)\Big(\prod_{j \in A^c} P_{\clh_{V_j,u}} \Big)
\\
& = \sum_{k \in \Z_+^{|A|}} \Big(\prod_{i \in A} {V_{i}^{k_i}}
\Big) \Big(\prod_{j \in A^c} P_{\clh_{V_j,u}} \Big) P_{\cln_A} \Big(\prod_{i \in A} V_{i}^{k_i}
\Big)^*.
\end{split}
\]
Consequently
\[
\clh_A = \text{ran } \Big(\sum_{k \in \Z_+^{|A|}} \Big(\prod_{i \in A} {V_{i}^{k_i}} \Big) \Big(\prod_{j \in A^c} P_{\clh_{V_j,u}} \Big) P_{\cln_A}\Big).
\]
Finally, note that
\[
\begin{split}
\text{ran } \Big(\Big(\prod_{j \in A^c} P_{\clh_{V_j,u}} \Big) P_{\cln_A}\Big) & = \bigcap_{j \in A^c} P_{\clh_{V_j,u}} \cln_A
\\
& = \bigcap_{j \in A^c}
\Big(\bigcap_{k_j \in \Z_+}V_j^{k_j} (\cln_A) \Big)
\\
& = \bigcap_{l \in \Z_+^{n-|A|}} {V_{I_n\setminus A}^l}(\cln_A),
\end{split}
\]
where the last equality follows from Lemma \ref{twisted intersection}. Therefore
\[
\clh_A  = \bigoplus_{k \in \Z_+^{|A|}} V_A^k\Big( \underset{l \in \Z_+^{n-|A|}}{\bigcap} {V_{I_n \setminus A}^l}(\cln_A)\Big).
\]
If $A = \emptyset$, then $\cln_A = \clh$, and hence
\[
\bigcap_{l \in \Z_+^{n}} {V_{I_n}^l} \cln_{\emptyset} = \bigcap_{l \in \Z_+^{n}} {V_{I_n}^l} \clh = \text{ran} \Big(\prod_{i \in I_n} P_{\clh_{V_i,u}}\Big) = \bigcap_{i \in I_n} \clh_{V_i,u},
\]
which completes the proof of the theorem.
\end{proof}

As already pointed out, the above decomposition was first observed in \cite[Theorem 3.6]{RSS}. The present proof is conceptually different and presented as an application of Theorem \ref{thm: class of decom}. Moreover, the techniques and ideas used in this section will be useful in describing the structure of twisted isometries.

\newsection{Wandering subspaces}\label{sec: wandering sub}

This section is devoted to the study of wandering subspaces. Let $V = (V_1, \ldots, V_n)$ be a tuple of (not necessarily commuting) isometries on $\clh$, and let $\cls$ be a closed subspace of $\clh$. We say that $\cls$ satisfies \textit{wandering property} (or $\cls$ is a wandering subspace) for $V$ if
\[
V^k \cls \perp V^l \cls,
\]
for all $k\neq l$ in $\Z_+^n$. If, in addition
\[
\clh = \bigoplus_{k \in \Z^n_+} V^k \cls,
\]
then we say that $\cls$ is a \textit{generating wandering subspace} for $V$. Generating wandering subspaces play a vital role in representing single isometries. Indeed, if $n=1$, then $\ker V^*$ is the generating wandering subspace which represents the pure part of an isometry $V$ (see Theorem \ref{thm: Wold}). Wandering subspaces for doubly twisted isometries are also explicit and plays important role in the structure of such tuples (compare Theorem \ref{thm: Wold for Un twisted} with the identity \eqref{eqn: W_A repr fod dc}). However, as we will see in this section, the analog of wandering subspaces for twisted isometries requires some more care. We begin with the following definition (recall the notaion \eqref{eqn: def of TA}).

\begin{definition}\label{def: weak wandering}
Let $V = (V_1, \ldots, V_n)$ be a tuple of isometries on $\clh$. The weak $A$-wandering subspace for $V$ is defined by
\[
\mathcal{E}_A:=\bigcap_{i\in A}	\bigcap_{B \subseteq \{i\}^c}
\bigcap_{k\in \Z_+^{|B|}} \Big(\ker V_i^*V_B^k\Big),
\]
whenever $A \subseteq I_n$ and $A \neq \emptyset$, whereas  $\cle_{\emptyset}:=\clh$.
\end{definition}

In the above, $\{i\}^c := I_n \setminus \{i\}$. First, we clarify that weak $A$-wandering subspaces of twisted isometries indeed satisfy wandering property.

\begin{lem}{\label{lemma 2}}
Let $V = (V_1, \ldots, V_n)$ be a twisted isometry, and let $A$ be a nonempty subset of $I_n$. Then $\cle_A$ satisfies the wandering property for $V$.
\end{lem}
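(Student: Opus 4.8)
The plan is to show that for $k \neq l$ in $\Z_+^n$ and $x, y \in \cle_A$, we have $\langle V^k x, V^l y \rangle = 0$. Since $k \neq l$, there is a coordinate where they differ; say $i \in A$ is such that $k_i \neq l_i$ (if no such $i$ lies in $A$, the differing coordinate lies in $A^c$, and I will need to handle that separately — see below). Without loss of generality assume $k_i > l_i$. The strategy is to use the twisted commutation relations $V_pV_q = U_{pq}V_qV_p$ (together with $V_p \in \{U_{st}\}'$) to rearrange the words $V^k$ and $V^l$ so that all factors of $V_i$ are collected on the left, modulo a unitary factor coming from the twist. Concretely, for a multi-index $k$, I would write $V^k = V_i^{k_i} W_k \cdot (\text{unitary})$, where $W_k$ is the product of the remaining $V_p$'s, $p \neq i$, in their natural order; the unitary is a word in the $U_{st}$'s, which commutes with everything in sight and is harmless inside an inner product since it is invertible. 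This reduction uses only the defining relations of a twisted isometry and the fact that the $U_{st}$ are central with respect to the $V_p$'s.

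After this rearrangement, the inner product $\langle V^k x, V^l y\rangle$ becomes, up to a unitary, $\langle V_i^{k_i} W_k x, V_i^{l_i} W_l y \rangle = \langle V_i^{k_i - l_i} W_k x, W_l y \rangle$ after cancelling $V_i^{*l_i}V_i^{l_i} = I$ (using $k_i > l_i$ and that $V_i$ is an isometry). Now $W_l y$ is, up to another twist unitary, of the form $V_B^m y$ for some $B \subseteq \{i\}^c$ and $m \in \Z_+^{|B|}$, and similarly $W_k x = V_B^{m'} (\cdots) x$. The key point is that by the definition of $\cle_A$, we have $y \in \ker V_i^* V_B^m$ for every $B \subseteq \{i\}^c$ and every $m$ — hence $V_i^* V_B^m y = 0$, and more generally (peeling off one $V_i^*$ at a time, again rearranging via the twist so that a $V_B$-word sits to the right of a single $V_i^*$) $V_i^{*p} V_B^m y = 0$ for all $p \geq 1$. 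Applying this with $p = k_i - l_i \geq 1$ collapses the inner product to zero.

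\textbf{The differing-coordinate-in-$A^c$ case.} If $k$ and $l$ agree on all of $A$ but differ somewhere in $A^c$, the above argument does not directly apply, so I would instead observe that $\cle_A$ is contained in a reducing subspace on which each $V_j$, $j \in A^c$, acts as a unitary — or more simply, pick $i \in A$ arbitrary (here $A \neq \emptyset$ is used) and note that $\cle_A \subseteq \bigcap_{B \subseteq \{i\}^c}\bigcap_m \ker V_i^* V_B^m$, which for $B$ ranging over subsets of $A^c \cup (A \setminus \{i\})$ in particular forces, for fixed values of the $A$-coordinates, orthogonality of the $V^k\cle_A$'s as the $A^c$-coordinates vary: indeed $\langle V^k x, V^l y\rangle$ with $k,l$ agreeing on $A$ reduces after moving the $V_i$'s aside to an expression of the form $\langle V_B^{k'} x, V_B^{l'} y\rangle$ with $B \subseteq \{i\}^c$ and then one applies the wandering-type vanishing built into $\cle_A$ relative to the index set $\{i\}^c$. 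This is essentially the same mechanism, just with $B$ chosen to include the coordinate where $k$ and $l$ differ.

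\textbf{Main obstacle.} The bookkeeping of the twist unitaries is the delicate part: each time one commutes a $V_p$ past a $V_q$ or a $V_i^*$ past a $V_B$-block, one picks up a word in the $U_{st}$'s, and one must be sure these accumulated unitaries (i) are genuinely unitary (so they preserve inner products and can be freely moved to the other side), and (ii) commute with the remaining $V$'s so that the collection-of-$V_i$-to-the-left step is legitimate. Both follow from the standing hypothesis that $V_k \in \{U_{st}\}_{s<t}'$ for all $k$ and $U_{ji} = U_{ij}^*$, but writing it cleanly — probably via an auxiliary lemma of the form "$V_i^{*p}V_B^k = (\text{unitary})\, V_B^k V_i^{*p}$ on $\clh$, hence $=0$ on $\ker V_i^* V_B^k$-type subspaces" — is where the real work lies. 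Once that normalization lemma is in place, the orthogonality is immediate from Definition \ref{def: weak wandering}.
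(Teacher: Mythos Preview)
You are trying to prove more than what the lemma (as actually used and proved in the paper) asserts. Although the statement reads ``wandering property for $V$'', the paper's proof and all subsequent uses concern only the tuple $V_A$: one shows $V_A^k\cle_A\perp V_A^l\cle_A$ for $k\neq l$ in $\Z_+^{|A|}$, and this is precisely what is needed to define $\clh_{V,A}=\bigoplus_{k\in\Z_+^{|A|}}V_A^k\clw_A$. The full-$V$ wandering property you attempt, with $k,l\in\Z_+^n$, is in fact false. Take $n=2$, $A=\{1\}$, $V_1=M_z\otimes I$ and $V_2=I\otimes M_{e^{i\theta}}$ on $H^2(\D)\otimes L^2(\T)$ (commuting, so twisted with trivial twist). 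Then $\cle_{\{1\}}=\ker V_1^*=\{1\}\otimes L^2(\T)$, and $V_2\cle_{\{1\}}=\cle_{\{1\}}$ is certainly not orthogonal to $\cle_{\{1\}}$. Your argument in the ``differing-coordinate-in-$A^c$'' case cannot work: after cancelling the matching $A$-factors there is no $V_i^*$ left to invoke the defining property of $\cle_A$, and the vague appeal to ``wandering-type vanishing built into $\cle_A$ relative to $\{i\}^c$'' has no content---membership in $\ker V_i^*V_B^m$ gives no orthogonality among $V_B^{k'}x$ and $V_B^{l'}y$ when no $V_i^*$ is present.

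For the part that matters (the differing coordinate lies in $A$), your strategy is correct but more laborious than necessary. The paper avoids all the twist-unitary bookkeeping by exploiting the fixed ordering in $V_A^k=V_{m_1}^{k_1}\cdots V_{m_p}^{k_p}$: if $j$ is the first index with $k_j\neq l_j$ (say $k_j<l_j$), the leading blocks $V_{m_1}^{k_1}\cdots V_{m_{j-1}}^{k_{j-1}}$ on both sides are identical isometries and cancel directly, with no commutation needed. One is left with $\langle V_{m_j}^*V_B^{\tilde k}x,\,\cdot\,\rangle$ where $B=\{m_{j+1},\ldots,m_p\}\subseteq\{m_j\}^c$, and this vanishes by the very definition of $\cle_A$. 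So your reordering-via-twist manoeuvre, while valid, is an unnecessary detour.
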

\begin{proof}
Let $|A| = p$ and suppose $A:=\{m_1<\cdots <m_p\}$. Let $k = (k_1,\ldots, k_p)$ and $l=(l_1,\ldots, l_p)$ be in $\Z_+^p$. Suppose $k \neq l$, and let $j$ be the minimum of all $i \in \{1,\cdots, p\}$ such that $k_i \neq l_i$. Assume, without loss of generality, that $k_j < l_j$. For $x, y \in \cle_A$, we have
\[
\begin{split}
\langle V_A^k x, V_A^l y \rangle & = \langle V_{m_j}^{k_j} \cdots V_{m_p}^{k_p} x, V_{m_j}^{l_j} \cdots V^{l_p}_{m_p}y \rangle
\\
& = \langle V_{m_j}^* (V_{m_{j+1}}^{k_{j+1}} \cdots V_{m_p}^{k_p}) x, V_{m_j}^{l_j- k_j - 1} \cdots V^{l_p}_{m_p}y \rangle.
\end{split}
\]
Note that (see Definition \ref{def: weak wandering})
\[
x \in \cle_A = \bigcap_{i\in A}	\bigcap_{B \subseteq \{i\}^c}
\bigcap_{k\in \Z_+^{|B|}} \Big(\ker V_i^*V_B^k\Big).
\]
Therefore, if we set $B:= \{m_{j+1}, \ldots, m_p\}$, then $B \subseteq \{m_j\}^c$, and consequently
\[
x \in \ker \Big(V_{m_j}^* V_B^{\tilde{k}} \Big),
\]
where $\tilde{k} = (k_{j+1}, \ldots k_p) \in \Z_+^{|B|}$. Consequently, $V_{m_j}^* V_B^{\tilde{k}} x = 0$, which implies
\[
\begin{split}
\langle V_A^k x, V_A^l y \rangle & = \langle V_{m_j}^* V_B^{\tilde{k}} x, V_{m_j}^{l_j- k_j - 1} \cdots V^{l_p}_{m_p}y \rangle
\\
& = 0,
\end{split}
\]
and completes the proof of the lemma.
\end{proof}

Before we get into the working class subspaces satisfying wandering property, we prove some lemmas. In the remaining part of this section we assume that $V = (V_1, \ldots, V_n)$ is a twisted isometry corresponding to a twist $\{U_{ij}\}_{i < j}$. The following simple observation will turn out to be an indispensable tool in what follows.

\begin{lem}\label{lemma: twist commute}
Let $k\in \Z_+^n$ and $i \in I_n$. Then there exists a monomial $\eta_{i,k} \in \C[z_1, \ldots, z_n]$ such that
\[
V_i V^k = V^k V_i \eta_{i,k}(U).
\]
\end{lem}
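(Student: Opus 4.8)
The plan is to prove \autoref{lemma: twist commute} by induction on the word length $|k| = k_1 + \cdots + k_n$, exploiting the twisted commutation relation $V_i V_j = U_{ij} V_j V_i$ together with the fact that every $V_p$ commutes with every $U_{st}$. The base case $|k| = 0$ is trivial with $\eta_{i,k} = 1$. For the inductive step, I would write $V^k = V_1^{k_1} \cdots V_n^{k_n}$ and push $V_i$ past the factors from left to right. Pushing $V_i$ past a single $V_j$ produces a factor $U_{ij}$ (or $U_{ij}^* = U_{ji}$, or $I$ when $i = j$), and since the $U_{st}$ are central with respect to all the $V_p$, each such factor can be slid freely all the way to the right past the remaining $V$'s without further interaction. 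Collecting all these factors as $V_i$ migrates through all $k_1 + \cdots + k_n$ letters of $V^k$ yields $V_i V^k = V^k V_i \, \omega$, where $\omega$ is a product of the relevant $U_{st}$'s; since the $U_{st}$ pairwise commute, $\omega$ can be written as $\eta_{i,k}(U)$ for a single monomial $\eta_{i,k} \in \C[z_1, \ldots, z_n]$ (under the natural substitution sending $z_s z_t^{-1}$-type data to $U_{st}$; more precisely one records the exponents so that $\eta_{i,k}$ is literally a monomial in commuting indeterminates evaluated at the $U$'s).

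Concretely, the cleanest bookkeeping is: to commute $V_i$ past $V_j^{k_j}$ one uses $V_i V_j^{k_j} = U_{ij}^{k_j} V_j^{k_j} V_i$ (valid because $V_i$ commutes with $U_{ij}$, so the twist factors simply accumulate), and one handles $V_i V_i^{k_i} = V_i^{k_i} V_i$ trivially. Iterating over $j = 1, \ldots, n$ in order gives
\[
V_i V^k = \Big(\prod_{j \ne i} U_{ij}^{\,k_j}\Big) V^k V_i,
\]
and since the $U_{ij}$ commute among themselves and commute with $V_i$, we may move the product to the right of $V_i$, obtaining $V_i V^k = V^k V_i \, \eta_{i,k}(U)$ with $\eta_{i,k}(z) = \prod_{j \ne i} z_j^{k_j}$ reinterpreted appropriately (identifying the indeterminate attached to index $j$ with $U_{ij}$, using $U_{ij} = U_{ji}^*$ to make sense of it for $j < i$). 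This is genuinely a monomial in $n$ (or $n-1$) commuting variables, as claimed.

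I do not expect any serious obstacle here; this is a routine algebraic manipulation. The only mild subtlety worth stating carefully is the indexing convention: the twist is only indexed by pairs $i < j$ with $U_{ji} := U_{ij}^*$, so when $V_i$ is pushed past $V_j$ with $j < i$ one gets $U_{ji}^* = U_{ij}$, and the formula $\eta_{i,k}(U) = \prod_{j \ne i} U_{ij}^{k_j}$ should be read with this convention. One should also note explicitly that $\eta_{i,k}(U)$ is unitary (being a product of unitaries), which is what makes it useful downstream. I would present the argument as a short induction, state the explicit formula for $\eta_{i,k}$, and remark that $\eta_{i,k}$ depends only on $i$ and $k$ (not on the particular isometries), which is the point of isolating it as a monomial evaluated at the universal twist variables.
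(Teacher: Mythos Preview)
Your proposal is correct and essentially identical to the paper's own proof: both push $V_i$ through $V^k = V_1^{k_1}\cdots V_n^{k_n}$ using $V_i V_j = U_{ij} V_j V_i$ together with $V_p \in \{U_{st}\}'$, arriving at the explicit formula $\eta_{i,k}(U) = \prod_{j\neq i} U_{ij}^{k_j}$ and then sliding this unitary to the right of $V_i$. The paper presents it as a one-line direct computation rather than an induction, but the content is the same.
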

\begin{proof}
By the definition of twisted isometries, we have $V_i V_j = U_{ij} V_j V_i$ for all $i \neq j$. Therefore
\[
\begin{split}
V_i V^k & = (U^{k_1}_{i1} \cdots U^{k_{i-1}}_{i(i-1)}) (U^{k_{i+1}}_{i (i+1)} \cdots U^{k_n}_{i n}) V^k V_i
\\
& = \eta_{i,k}(U) V^k V_i
\\
& = V^k V_i \eta_{i,k}(U),
\end{split}
\]
where
\[
\eta_{i,k} = z_1^{k_1} \cdots z_{i-1}^{k_{i-1}} z_{i+1}^{k_{i+1}}\cdots z_n^{k_n} \in \C[z_1, \ldots, z_n],
\]
and $\eta_{i,k}(U)$ refers to the polynomial functional calculus.
\end{proof}

In the above, the polynomial functional calculus $\eta_{i,k}(U)$ is given by
\[
\eta_{i,k}(U) = U^{k_1}_{i1} \cdots U^{k_{i-1}}_{i(i-1)} U^{k_{i+1}}_{i (i+1)} \cdots U^{k_n}_{i n}.
\]
This is a convention we will adopt throughout the remainder of this paper. Moreover, if $i$ and $k$ are clear from the context, then we simply denote a monomial in $U$ by $\eta(U)$ instead of $\eta_{i,k}(U)$.

Before proceeding, we observe the useful identity
\begin{equation}\label{eqn: U = VV}
U_{ij} = V_i^* V_j^* V_i V_j \qquad (i \neq j).
\end{equation}
This follows from the definition of twisted isometries that $V_iV_j = U_{ij} V_j V_i$ and the fact that $V_i \in \{U_{st}\}_{s<t}'$ for all $i \neq j$.

\begin{lem}\label{lemma 1}
$U_{ij} \cle_A= \cle_A$ and $U_{ij} \cln_A= \cln_A$ for all $i\neq j$ and $A \subseteq I_n$.
\end{lem}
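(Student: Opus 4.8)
The plan is to prove both assertions by reducing to the action of $U_{ij}$ on the building blocks $\ker V_s^*$ and on kernels of the form $\ker(V_s^* V_B^k)$, exploiting the two facts already available: that each $V_p$ commutes with every $U_{st}$, and the identity \eqref{eqn: U = VV}, $U_{ij} = V_i^* V_j^* V_i V_j$. First I would dispatch the claim for $\cln_A$. Since $V_p \in \{U_{st}\}'$ for all $p$, we have $V_p^* \in \{U_{st}\}'$ as well (unitaries: $U_{st} V_p = V_p U_{st}$ implies $V_p^* U_{st} = U_{st} V_p^*$ after multiplying and using $U_{st}^{-1} = U_{st}^*$, together with $U_{st}^* = U_{ts}$ being another member of the family). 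Hence $U_{ij}$ commutes with each $P_{\cln_s} = I - V_s V_s^*$, so $U_{ij}$ maps $\cln_s$ into itself; being unitary it maps $\cln_s$ \emph{onto} itself, i.e. $U_{ij}\cln_s = \cln_s$. For general $A \neq \emptyset$, intersecting gives $U_{ij}\cln_A = U_{ij}\bigcap_{s\in A}\cln_s = \bigcap_{s\in A} U_{ij}\cln_s = \cln_A$, where the middle equality uses that $U_{ij}$ is a bijection. (For $A = \emptyset$ the statement $U_{ij}\clh = \clh$ is trivial since $U_{ij}$ is unitary.)

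Next I would handle $\cle_A$. By Definition \ref{def: weak wandering}, $\cle_A$ is an intersection of subspaces of the form $\ker(V_i^* V_B^k)$ for various $i \in A$, $B \subseteq \{i\}^c$, $k \in \Z_+^{|B|}$, so again by bijectivity of $U_{ij}$ it suffices to show $U_{ij}\,\ker(V_i^* V_B^k) = \ker(V_i^* V_B^k)$ for each such piece, and for that it is enough to show $U_{ij}$ commutes with $V_i^* V_B^k$, or at least normalizes this kernel. The key computation: for $x$ with $V_i^* V_B^k x = 0$, I want to show $V_i^* V_B^k (U_{ij} x) = 0$. Since $U_{ij}$ commutes with every $V_t$ and every $V_t^*$ (as just argued), $V_i^* V_B^k U_{ij} = U_{ij} V_i^* V_B^k$, and applying this to $x$ gives $V_i^* V_B^k U_{ij} x = U_{ij}(V_i^* V_B^k x) = 0$. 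So $U_{ij}$ maps $\ker(V_i^* V_B^k)$ into itself; applying the same argument to $U_{ij}^* = U_{ji}$ (another member of the twist) gives the reverse inclusion, hence equality $U_{ij}\,\ker(V_i^* V_B^k) = \ker(V_i^* V_B^k)$. Intersecting over all the index data yields $U_{ij}\cle_A = \cle_A$.

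I expect the only subtlety — and it is minor — to be the careful bookkeeping that $V_p \in \{U_{st}\}'$ really does give $V_p^* \in \{U_{st}\}'$; this is where one uses that the $U_{st}$ are unitary and that the family is closed under adjoints (since $U_{ts} = U_{st}^*$). Once that is in hand, everything else is a direct consequence of $U_{ij}$ being a unitary that commutes with all the $V_p$ and $V_p^*$, so that it commutes with every word in the $V$'s and $V^*$'s appearing in the definitions of $\cln_A$ and $\cle_A$, and therefore permutes the corresponding kernels bijectively onto themselves. The identity \eqref{eqn: U = VV} is not strictly needed for this particular lemma but reinforces that $U_{ij}$ lies in the von Neumann algebra generated by the $V$'s, consistent with the claimed invariance.
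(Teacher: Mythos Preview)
Your proposal is correct and follows essentially the same approach as the paper: both arguments rest on the observation that $U_{ij}$ commutes with every $V_k$ and $V_k^*$ (the paper records this as \eqref{eqn: V i Vj commute}), hence with any word $V_t^* V_B^k$, so that $U_{ij}$ and $U_{ij}^*$ preserve each kernel appearing in the definitions of $\cle_A$ and $\cln_A$. The paper's version is slightly terser (it works directly with an element $x\in\cle_A$ rather than reducing to individual building-block kernels), but the content is the same, and your remark that \eqref{eqn: U = VV} is not actually needed here is accurate.
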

\begin{proof}
If $A=\emptyset$, then $\cle_\emptyset=\clh=\cln_\emptyset$, and the desired equality is clear. Suppose $A \neq \emptyset$. By assumption, we have
\begin{equation}\label{eqn: V i Vj commute}
U_{ij} V_k = V_k U_{ij}, \text{ and } U_{ij} V_k^* = V_k^* U_{ij},
\end{equation}
for all $i, j, k \in I_n$ and $i \neq j$. Fix $A \subseteq I_n$, $i \neq j$, and suppose $x \in \cle_A$. We know that $x \in \ker V_t^* V_B^k$ for all $t \in A$, $B \subseteq \{t\}^c$ and $k \in \Z_+^{|B|}$.
Therefore,
\[
V_t^* V_B^k U_{ij} x = U_{ij} V_t^* V_B^k x =0,
\]
and similarly $V_t^* V_B^k U_{ij}^*x = 0$. This implies that $U_{ij}$ reduces $\cle_A$. Since $U_{ij}$ is unitary, it follows that $U_{ij} \cle_A= \cle_A$. The second equality $U_{ij} \cln_A= \cln_A$ follows from \eqref{eqn: V i Vj commute}.
\end{proof}

Since $V_i V_j = U_{ij}V_j V_i = V_j V_i U_{ij}$ for all $i \neq j$, the above lemma implies that:

\begin{lem}\label{lemma: ViVj WA}
$V_iV_j \cle_A = V_j V_i \cle_A$ for all $A\subseteq I_n$.
\end{lem}

We also have the following invariance property:

\begin{lem}\label{lemma:Ea invariant}
$V_j \mathcal{E}_A \subseteq \mathcal{E}_A$ for all $A\subsetneqq I_n$ and $j\in A^c$.
\end{lem}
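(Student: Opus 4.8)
\textbf{Proof plan for Lemma \ref{lemma:Ea invariant}.}

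The plan is to fix $A \subsetneqq I_n$, $j \in A^c$, and $x \in \cle_A$, and verify directly that $V_j x$ lies in every kernel appearing in the intersection defining $\cle_A$ (Definition \ref{def: weak wandering}). That is, for each $i \in A$, each $B \subseteq \{i\}^c$, and each $k \in \Z_+^{|B|}$, I would show $V_i^* V_B^k (V_j x) = 0$. Since $j \in A^c$ and $i \in A$ we always have $i \neq j$, so I can use the twisted commutation relations freely to move $V_j$ past the factors of $V_B^k$ and $V_i^*$, at the cost of introducing unitary twists $U_{st}$ (via Lemma \ref{lemma: twist commute} for the ``forward'' factors and relation \eqref{eqn: V i Vj commute} for commuting $V_j$ through $V_i^*$). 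The goal of this rearrangement is to expose a factor of the form $V_i^* V_{B'}^{k'}$ acting on $x$, where $B' \subseteq \{i\}^c$ is a set obtained from $B \cup \{j\}$, so that the defining property $x \in \cle_A$ forces the whole expression to vanish.

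The one subtlety, and the reason $A \subsetneqq I_n$ and $j \in A^c$ are needed, is bookkeeping about which index $j$ lands in after commuting. I would split into two cases according to whether $j \in B$ or $j \notin B$. If $j \notin B$: after commuting $V_j$ to the right past $V_B^k$ and past $V_i^*$ (picking up a product of twists $\eta(U)$, which by Lemma \ref{lemma 1} preserves $\cle_A$, so I may as well absorb it onto $x$), I reach $V_i^* V_B^k V_j \eta(U) x = V_i^* V_{B\cup\{j\}}^{(k, k_j)} x'$ with $k_j = 1$ and $x' = \eta(U)x \in \cle_A$; since $B \cup \{j\} \subseteq \{i\}^c$, this is zero by definition of $\cle_A$. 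If $j \in B$, say $j = m_r$ in the ordered listing of $B$, then $V_j$ appears as an interior factor $V_{m_r}^{k_r}$; I would commute the new copy of $V_j$ in to merge with it, again at the cost of twists, raising the exponent from $k_r$ to $k_r + 1$; then $V_i^* V_B^{k'} x' = 0$ with $k'$ the incremented multi-index and $x'$ a unitary-twist of $x$ still in $\cle_A$.

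I expect the main obstacle to be purely notational: carefully tracking the monomial $\eta(U)$ produced at each commutation and confirming (via Lemma \ref{lemma 1}) that it maps $\cle_A$ onto $\cle_A$, so it can be harmlessly absorbed, together with the ordering convention in $V_B^k = V_{m_1}^{k_1}\cdots V_{m_{|B|}}^{k_{|B|}}$ when $j$ must be inserted in its proper slot. There is no analytic difficulty here; once the commutation relations are applied and the twists are absorbed using Lemma \ref{lemma 1}, the conclusion is immediate from the very definition of the weak $A$-wandering subspace. A clean way to streamline the write-up is to first prove the auxiliary statement that for $i \in A$, $i \neq j$, and any $B \subseteq \{i\}^c$ with $k \in \Z_+^{|B|}$, one has $V_j\big(\ker(V_i^* V_B^k)\big) \subseteq \ker(V_i^* V_{B\cup\{j\}}^{k'})$ for the appropriately augmented $(B',k')$, and then intersect over all $i$, $B$, $k$.
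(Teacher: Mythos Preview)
Your proposal is correct and is precisely the argument that the paper's one-line proof leaves implicit: the paper simply asserts $V_i^* V_B^k V_j x = 0$ ``since $j \in A^c$'', and your rewriting of $V_B^k V_j$ as $V_{B'}^{k'}\eta(U)$ with $B' = B \cup \{j\} \subseteq \{i\}^c$ (or $B'=B$ with an incremented exponent when $j\in B$), followed by absorbing $\eta(U)$ into $x$ via Lemma~\ref{lemma 1}, is exactly the justification that line needs. One small point to clean up: you never need to commute $V_j$ through $V_i^*$, and indeed no such relation is available for merely twisted (as opposed to doubly twisted) isometries; what relation~\eqref{eqn: V i Vj commute} gives you is only that the twist $\eta(U)$ commutes with $V_i^*$, which is harmless and in fact unnecessary once you keep $\eta(U)$ on the right and invoke Lemma~\ref{lemma 1} directly.
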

\begin{proof}
We know that $\mathcal{E}_\emptyset = \clh$. Then obviously $V_j \mathcal{E}_\emptyset \subseteq \mathcal{E}_\emptyset$. Suppose $A \neq \emptyset$. Let $ x\in \mathcal{E}_A$. Since $j\in A^c$, for any $B \subseteq \{i\}^c, k\in \Z_+^{|B|}$ and $i\in A$, we have, $V_i^*V_B^k V_j(x)=0$. This implies $V_j \mathcal{E}_A \subseteq \mathcal{E}_A$ and completes the proof of the lemma.
\end{proof}

Although the weakly wandering subspace $\cle_A$ satisfies the wandering property, for the sake of appropriate orthogonal decompositions of twisted isometries, we require to identify a suitable subspace of $\cle_A$:

\begin{definition}\label{def: wand sub}
Let $V = (V_1, \ldots, V_n)$ be a twisted isometry. For each $A \subseteq I_n$, the $A$-wandering subspace for $V$ is defined by
\[
\clw_A = \underset{l\in \Z_+^{n-|A|}}{\bigcap}V^l_{I_n\setminus A}\mathcal{E}_A.
\]
\end{definition}

Clearly, $\clw_A \subseteq \cle_A$ for all $A \subseteq I_n$. In particular, we have $V_A^k \clw_A \perp V_A^l \clw_A$ for all $k \neq l$ in $\Z_+^{|A|}$, $A \neq \emptyset$, and $A \subseteq I_n$. Therefore, the orthogonal sum
\begin{equation}\label{eqn: H VA}
\clh_{V, A}:= \underset{k\in \Z_+^{|A|}}{\bigoplus}V_A^k \clw_A,
\end{equation}
is well-defined for all $A \neq \emptyset$. We also set $\clh_{V, \emptyset} := \clw_{\emptyset}$, that is
\[
\clh_{V, \emptyset} := \clw_{\emptyset}=\underset{k\in \Z_+^{n}}{\bigcap}V^k_{I_n}\clh.
\]
By the commutativity property of $V_k$'s and $U_{ij}$'s as in \eqref{eqn: V i Vj commute} and Lemma \ref{lemma 1}, it follows that
\begin{equation}\label{eqn: Uij H VA}
U_{ij} \clh_{V,A} = \clh_{V,A},
\end{equation}
for all $i\neq j$ and $A \subseteq I_n$. Moreover, we have:

\begin{lem}{\label{lemma 3}}
Let $V = (V_1, \ldots, V_n)$ be a twisted isometry. Then:
\begin{enumerate}
\item $V_j \clw_A = \clw_A$ for all $A \subsetneqq I_n$ and $j \in I_n\setminus A$.
\item $U_{st} \clw_A = \clw_A$ for all $s < t$ and $A \subseteq I_n$.
\item $V_j \clh_{V, A} = \clh_{V, A}$ for all $A \subsetneqq I_n$ and $j \in I_n\setminus A$.
\end{enumerate}
\end{lem}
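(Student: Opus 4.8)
The plan is to prove the three items in order, with (1) as the main step and (2), (3) following by the commutativity relations already established. For item (1), fix $A \subsetneqq I_n$ and $j \in I_n \setminus A$. The key point is that $\clw_A = \bigcap_{l \in \Z_+^{n-|A|}} V_{I_n \setminus A}^l \, \cle_A$, and $V_j$ is one of the isometries appearing in the tuple $V_{I_n \setminus A}$. I would first use Lemma \ref{lemma:Ea invariant} (giving $V_j \cle_A \subseteq \cle_A$) together with Lemma \ref{lemma: twist commute} (which lets one permute $V_j$ past other $V_k$'s at the cost of a unitary monomial $\eta(U)$ in the twist) and Lemma \ref{lemma 1} (giving $U_{st}\cle_A = \cle_A$) to see that $V_j \clw_A \subseteq \clw_A$: indeed, for $y \in \clw_A$, one has $y = V_{I_n\setminus A}^l x_l$ for each multi-index $l$ with $x_l \in \cle_A$, and a suitable reshuffling shows $V_j y$ again lies in $V_{I_n\setminus A}^l\,\cle_A$ for every $l$ (after absorbing a twist monomial, which preserves $\cle_A$). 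For the reverse inclusion $\clw_A \subseteq V_j \clw_A$, take $y \in \clw_A$; since the index $j$ appears in the intersection defining $\clw_A$, for every $l$ there is $z_l \in \cle_A$ with $y = V_j (V_{(I_n\setminus A)\setminus\{j\}}^{l'}\, z_l)$ — i.e. $y$ is already in the range of $V_j$ restricted to an appropriate subspace — and one checks the preimage $V_j^* y$ lies in $\clw_A$ using that $V_j^*$ commutes with the twist (see \eqref{eqn: V i Vj commute}) and that $\cle_A$ is $V_j^*$-reduced on the relevant pieces. Combining the two inclusions gives $V_j \clw_A = \clw_A$.

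For item (2), fix $s < t$ and $A \subseteq I_n$. When $A = \emptyset$ this is immediate since $\clw_\emptyset = \clh$. For $A \neq \emptyset$, recall $\clw_A = \bigcap_{l} V_{I_n\setminus A}^l \,\cle_A$. By Lemma \ref{lemma 1} we have $U_{st}\cle_A = \cle_A$, and by \eqref{eqn: V i Vj commute} the unitary $U_{st}$ commutes with every $V_k$, hence with each $V_{I_n\setminus A}^l$. Therefore $U_{st}\big(V_{I_n\setminus A}^l \,\cle_A\big) = V_{I_n\setminus A}^l \,U_{st}\,\cle_A = V_{I_n\setminus A}^l\,\cle_A$ for every $l$, and intersecting over $l$ gives $U_{st}\clw_A = \clw_A$.

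For item (3), fix $A \subsetneqq I_n$ and $j \in I_n \setminus A$. If $A = \emptyset$, then $\clh_{V,\emptyset} = \clw_\emptyset$ and the claim is item (1). If $A \neq \emptyset$, then by definition $\clh_{V,A} = \bigoplus_{k \in \Z_+^{|A|}} V_A^k \clw_A$. Using Lemma \ref{lemma: twist commute}, for each $k$ we may write $V_j V_A^k = V_A^k V_j \,\eta(U)$ for an appropriate monomial $\eta$ in the twist; since $\eta(U)\clw_A = \clw_A$ by item (2) and $V_j \clw_A = \clw_A$ by item (1), we get $V_j\big(V_A^k\clw_A\big) = V_A^k\,V_j\,\eta(U)\,\clw_A = V_A^k \clw_A$. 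The isometry $V_j$ preserves orthogonality of the summands $V_A^k\clw_A$ (it is isometric, and these were already pairwise orthogonal by Lemma \ref{lemma 2} applied to $\clw_A \subseteq \cle_A$), so $V_j$ maps the orthogonal direct sum onto itself: $V_j\clh_{V,A} = \bigoplus_k V_j(V_A^k\clw_A) = \bigoplus_k V_A^k\clw_A = \clh_{V,A}$.

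I expect the main obstacle to be the reverse inclusion in item (1), namely showing $\clw_A \subseteq V_j\clw_A$: surjectivity of $V_j$ on $\clw_A$ is not a formal consequence of $V_j$ being one coordinate of the intersection, because the twist monomials introduced when commuting $V_j$ past other $V_k$'s must be tracked carefully to confirm the preimage $V_j^* y$ genuinely lands back in every $V_{I_n\setminus A}^{l}\,\cle_A$. The computations in the proof of Theorem \ref{thm: Wold for Un twisted}, especially the use of Lemma \ref{twisted intersection} and the commutation identity \eqref{eqn: U = VV}, should supply exactly the bookkeeping needed.
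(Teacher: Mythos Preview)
Your outline follows the paper's route and is essentially correct, but two points deserve correction. First, you have the difficulty in item~(1) reversed: the paper treats $\clw_A \subseteq V_j\clw_A$ as the \emph{trivial} inclusion---since the defining intersection ranges over all $l\in\Z_+^{n-|A|}$, replacing $l$ by $l+e_j$ and absorbing the resulting twist monomial into $\cle_A$ via Lemma~\ref{lemma 1} gives $\clw_A \subseteq \bigcap_l V_j V_{I_n\setminus A}^{\,l}\cle_A = V_j\clw_A$ immediately, with no appeal to any $V_j^*$-invariance of $\cle_A$ (which is \emph{not} established for merely twisted isometries, so your phrase ``$\cle_A$ is $V_j^*$-reduced'' is unsupported and should be dropped). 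The inclusion $V_j\clw_A \subseteq \clw_A$ is the one the paper calls nontrivial, and it is there that Lemma~\ref{lemma:Ea invariant} enters. Second, $\clw_\emptyset = \bigcap_{l\in\Z_+^{n}} V^l\clh$, not $\clh$; your argument for (2) still goes through with this correction since $\cle_\emptyset = \clh$ and $U_{st}\clh=\clh$.
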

\begin{proof}
Fix $A \subsetneqq I_n$ and $j \in I_n \setminus A$. Clearly, $\clw_A \subseteq V_{j} \clw_A$. For the nontrivial inclusion, observe that by Lemma \ref{lemma: twist commute}, there exists a monomial $\eta_1 \in \C[z_1, \ldots, z_{n-|A|}]$ such that
\[
\begin{split}
V_{j}  \Big(\bigcap_{l\in \Z_+^{n-|A|}} V^l_{I_n\setminus A}\mathcal{E}_A\Big) & = \bigcap_{l\in \Z_+^{n-|A|}} V^l_{I_n\setminus A} V_{j} \eta_1 (U) \cle_A
\\
& = \bigcap_{l\in \Z_+^{n-|A|}} V^l_{I_n\setminus A}V_{j} \cle_A
\\
& \subseteq \bigcap_{l\in \Z_+^{n-|A|}} V^l_{I_n\setminus A} \cle_A.
\end{split}
\]
where the last equality and the inclusion follow from Lemma \ref{lemma 1} and Lemma \ref{lemma:Ea invariant}, respectively. Hence $ V_{j} \clw_A = \clw_A$, which completes the proof of part (1).

\noindent Part (2) follows from the Lemma \ref{lemma 1} along with the fact that $U_{ij}$ commutes with $V_k$ for all $i \neq j$ and $k$.

\noindent Now we prove that $V_j \clh_{V, A} = \clh_{V, A}$, for all $A\subsetneqq I_n$ and $j \in A^c$. Fix $A\subsetneqq I_n$ and $j \in A^c$. Note that for each $k \in \Z_+^{|A|}$, there exists a monomial $\eta_k$ such that $V_j V_A^k = V_A^k V_j \eta_k(U)$. In view of $\eta_k(U) \clw_A = \clw_A$, we compute
\[
\begin{split}
V_j \clh_{V, A} & = V_j \Big(\underset{k\in\Z_+^{|A|}}{\bigoplus} V_A^k \clw_A\Big)
\\
& = \Big(\underset{k\in\Z_+^{|A|}}{\bigoplus} V_A^k V_j \eta_k(U)\clw_A\Big)
\\
& = \Big(\underset{k\in\Z_+^{|A|}}{\bigoplus} V_A^k V_j \clw_A\Big).
\end{split}
\]
Since $V_j \clw_A = \clw_A$, by part (1), it follows that $V_j \clh_{V, A} = \clh_{V, A}$, which completes the proof of the lemma.  	
\end{proof}

Wandering subspaces will play a key role in the remaining part of the paper.

\newsection{Twisted weak shifts}\label{sec: weak shift}
	
In this section, we will introduce the notion of twisted weak shifts. This will appear to be the right generalization of Popovici's weak bi-shifts for pairs of commuting isometries. We begin with the definition of twisted shifts which was introduced in \cite{RSS}.

\begin{definition}{\label{def: n shift}}
A twisted shift is an $n$-tuple of doubly twisted isometry $(V_1, \ldots, V_n)$ such that $V_i$ is a shift for all $i=1, \ldots, n$.
\end{definition}

Suppose $V = (V_1, \ldots, V_n)$ is a doubly twisted isometry on $\clh$. In view of Theorem \ref{thm: Wold for Un twisted}, $V$ admits a von Neumann-Wold decomposition $\clh = \bigoplus_{A \subseteq I_n} \clh_A$, where
\[
\clh_A = \bigoplus_{k \in \Z_+^{|A|}} V_A^k \Big(\bigcap_{l \in \Z_+^{n-|A|}}V^{l}_{I_n \setminus A} \cln_A\Big) \qquad (A \subseteq I_n).
\]
Therefore, $V$ is a twisted shift if and only if $\clh_A = \{0\}$ for all $A \neq I_n$. Equivalently, $\clh$ admits the following decomposition
\[
\clh = \bigoplus_{k \in \Z^n_+}V^k(\cln),
\]
where $\cln =: \cln_{I_n} = \bigcap_{i \in I_n} \ker V_i^*$.

The typical example of twisted shifts is built up from commuting unitary operators and shifts on $H^2(\D^n)$, the Hardy space over the open unit polydisc $\D^n$. Recall that $H^2(\D^n)$ is the Hilbert space of all square summable analytic functions on $\D^n$. Given a Hilbert space $\cle$, we denote by $H^2_{\cle}(\D^n)$ the $\cle$-valued Hardy space over $\D^n$. Then $(M_{z_1}, \ldots, M_{z_n})$ defines a doubly commuting shifts on $H^2_{\cle}(\D^n)$, where $M_{z_i} f = z_i f$ for all $f \in H^2_{\cle}(\D^n)$. It is often convenient to identify $H^2_{\cle}(\D^n)$ with $H^2(\D^n) \otimes \cle$. We need a definition:

\begin{definition}
Let $\cle$ be a Hilbert space, $U \in \clb(\cle)$ be a unitary, and let $j \in \{1, \ldots, n\}$. The \textit{$j$-th diagonal operator with symbol $U$} is the unitary operator $D_j[U]$ on $H^2_{\cle}(\D^n)$ defined by
\[
D_j[U] (z^k \eta) = z^k (U^{k_j} \eta) \qquad (k \in \Z_+^n, \eta \in \cle).
\]
\end{definition}

Let $\cle$ be a Hilbert space, and let $\{U_{ij}\}_{i<j}$ be a twist on $\cle$. Then $\{I_{H^2(\D^n)} \otimes U_{ij}\}_{i < j}$ defines a twist on $H^2_{\cle}(\D^n)$ (or on $H^2(\D^n) \otimes \cle$, to be more specific). Define
\[
V_i = \begin{cases}
M_{z_1} & \mbox{if } i=1
\\
M_{z_i} \Big(D_1 [U_{i 1}] D_2 [U_{i 2}] \cdots D_{i-1}[U_{i i-1}]\Big) & \mbox{otherwise}.
\end{cases}
\]
A routine computation (cf. \cite{RSS}) then reveals that $(V_1, \ldots, V_n)$ is a twisted shift on $H^2_{\cle}(\D^n)$. This is essentially a model example of twisted shifts. Observe that, since a twist is made by a family of commuting unitaries and a twisted shift is made of a twist, it is immediate that the class of doubly twisted isometries is larger than the doubly non-commuting isometries \cite{JP}.

In order to formulate the notion of twisted weak shifts, we need a characterization of twisted shifts. Suppose $V = (V_1, \ldots, V_n)$ is a twisted isometry. Clearly, $\Pi_{i \in I_n} V_i$ is an isometry. The following lemma, in particular, explains the unitary part of $\Pi_{i \in I_n} V_i$ in terms of $\clh_{\emptyset}$ as in Definition \ref{def: wand sub}.

\begin{lem}\label{lemma: intersection}
Let $V = (V_1, \ldots, V_n)$ be a twisted isometry on $\clh$, and let $\cls \subseteq \clh$ reduces $V$. Then
\[
\underset{m \in \Z_+}{\bigcap} (V_1 \cdots V_2)^m \cls = \underset{k \in \Z_+^n}{\bigcap}
V^k \cls.
\]
\end{lem}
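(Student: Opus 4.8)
The statement to be proved is a purely set-theoretic/operator-theoretic identity: for a twisted isometry $V = (V_1, \ldots, V_n)$ with $\cls$ reducing $V$, we have $\bigcap_{m \in \Z_+} (V_1 \cdots V_n)^m \cls = \bigcap_{k \in \Z_+^n} V^k \cls$. (I read the product as $V_1 \cdots V_n$; the ``$V_1 \cdots V_2$'' in the displayed line must be a typo.) This should be thought of as the twisted-isometry analogue of Lemma \ref{twisted intersection}, which established exactly this identity in the doubly twisted case, with the further reduction there to $\bigcap_{i \in I_n} \bigcap_{k_i \in \Z_+} V_i^{k_i}\cls$. The plan is to mimic that argument, using Lemma \ref{lemma: twist commute} in place of the simpler commutation relations available for doubly twisted tuples.

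\textbf{Key steps.} First I would record that $\cls$ reduces each $U_{ij}$; this follows from the identity \eqref{eqn: U = VV}, namely $U_{ij} = V_i^* V_j^* V_i V_j$, together with the hypothesis that $\cls$ reduces each $V_t$. Then $U_{ij}\cls = \cls$ since $U_{ij}$ is unitary; consequently every monomial $\eta(U)$ in the $U_{ij}$'s satisfies $\eta(U)\cls = \cls$ and each such $\eta(U)$ leaves $\cls$ invariant in both directions. Second, the inclusion $\bigcap_{k \in \Z_+^n} V^k \cls \subseteq \bigcap_{m} (V_1 \cdots V_n)^m \cls$ is essentially trivial: a vector in the right side is, for each $m$, of the form $V^{(m,\ldots,m)} x$ with $x \in \cls$ if we can absorb the reordering, or more simply one observes $(V_1\cdots V_n)^m\cls = V_1 V_2 \cdots V_n V_1 \cdots V_n \cdots \cls$ and repeatedly using Lemma \ref{lemma: twist commute} to collect powers one rewrites this as $V^{(m,\ldots,m)}(\text{monomial in } U)\cls = V^{(m,\ldots,m)}\cls$; hence the left-hand intersection, being contained in every $V^k\cls$ and in particular every $V^{(m,\ldots,m)}\cls = (V_1\cdots V_n)^m\cls$, lies in the right-hand intersection. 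Third, and this is the substantive direction, I would prove $\bigcap_{m}(V_1\cdots V_n)^m\cls \subseteq \bigcap_{k}V^k\cls$. Given $y$ in the left side and a target multi-index $k = (k_1,\ldots,k_n)$, set $m = \max_i k_i$. Then $y \in (V_1\cdots V_n)^m \cls$, which by the rewriting above equals $V_1^m V_2^m \cdots V_n^m \cdot (\text{monomial in }U)\cdot\cls = V_1^m\cdots V_n^m\cls$; so $y = V_1^m \cdots V_n^m x$ for some $x\in\cls$. Now I peel off the excess powers: since $V_n^{m-k_n}$ is an isometry and (using Lemma \ref{lemma: twist commute} to commute it leftward past the necessary $V_i$'s, picking up a unitary monomial $\eta(U)$ that preserves $\cls$) one can write $V_1^m\cdots V_n^m x = V_1^{k_1}\cdots V_n^{k_n}\big(\eta(U) V_1^{m-k_1}\cdots V_{n-1}^{m-k_{n-1}} V_n^{m - k_n} x'\big)$ for an appropriate $x'\in\cls$ obtained by applying an adjoint of a monomial in the $U$'s; the bracketed vector lies in $\cls$ because $\cls$ reduces every $V_i$ and every $U_{ij}$. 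Hence $y \in V^k\cls$; since $k$ was arbitrary, $y \in \bigcap_k V^k\cls$.

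\textbf{Main obstacle.} The delicate point is the bookkeeping of the monomials $\eta(U)$ that appear each time one moves an isometry past another via Lemma \ref{lemma: twist commute}: one must be sure that the factor one is left multiplying, or the adjoint one peels off, is always a monomial in the commuting unitaries $U_{ij}$ (never an adjoint of a $V_i$), so that it maps $\cls$ onto $\cls$ and can be absorbed. This is exactly the role played by the observation that $\cls$ reduces the $U_{ij}$'s and by the fact that the $U_{ij}$'s commute with all $V_k$'s; once those two facts are in hand the rearrangements are purely formal, just as in the proof of Lemma \ref{twisted intersection}. A clean way to organize it is to prove first the $n = 2$ case verbatim as in Lemma \ref{twisted intersection} (where $V_1 V_2 = U_{12} V_2 V_1$ and $U_{12} = V_1^* V_2^* V_1 V_2$ are available directly) and then induct on $n$, grouping $V_1 \cdots V_n = (V_1 \cdots V_{n-1}) V_n$ and applying the inductive hypothesis to the $(n-1)$-tuple $(V_1,\ldots,V_{n-1})$ restricted to the reducing subspace $\bigcap_{k_n} V_n^{k_n}\cls$, after checking that this subspace indeed reduces $V_1, \ldots, V_{n-1}$ and that $(V_1,\ldots,V_{n-1})$ is again a twisted isometry there. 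I expect the induction to go through smoothly with the only real care needed in verifying that $\bigcap_{k_n}V_n^{k_n}\cls$ is $V_i$-reducing for $i < n$, which again reduces to Lemma \ref{lemma: twist commute} and the invariance of $\cls$ under the $U_{ij}$'s.
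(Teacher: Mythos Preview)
Your proposal is correct and follows essentially the same approach as the paper: the paper also reduces to the case $n=2$ and appeals to induction, observes via \eqref{eqn: U = VV} that $\cls$ reduces the twist, and then uses Lemma \ref{lemma: twist commute} to rearrange $(V_1V_2)^m$ into $V_1^{k_1}V_2^{k_2}$ times an element of $\cls$ (choosing $m$ large enough relative to $(k_1,k_2)$), with the unitary monomials in $U$ absorbed by $\cls$. Your direct general-$n$ argument with $m=\max_i k_i$ is a minor variant of the same idea and works just as well.
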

\begin{proof}
We prove it only for $n=2$ as the remaining part can easily be proven by induction for any positive integer $n \geq 2$. Suppose $V = (V_1, V_2)$. Evidently $\bigcap_{m \in \Z_+}(V_1 V_2)^m \cls \supseteq \bigcap_{k_1, k_2 \in \Z_+} V_1^{k_1} V_2^{k_2} \cls$. For the reverse inclusion, suppose $x \in \bigcap_{m \in \Z_+}(V_1 V_2)^m \cls$. Let $U$ be the corresponding twist for $(V_1, V_2)$. Since $\cls$ reduce $V_1$ and $V_2$, it follows that $\cls$ also reduces $U$ (see \eqref{eqn: U = VV}). Let $k_1, k_2 \in \Z_+$ and suppose $k_1 < k_2$. There exists $h \in \cls$ (depending on $k_2$) such that $x = (V_1V_2)^{k_2} h$. By  Lemma \ref{lemma: twist commute} again, there exist monomials $\eta_1$ and $\eta_2$ such that
\[
\begin{split}
x & = (V_1V_2)^{k_1} (V_1V_2)^{k_2-k_1} h
\\
& = V_1^{k_1}V_2^{k_1}(V_1V_2)^{k_2-k_1} (\eta_1 (U^*) h)
\\
& = V_1^{k_1}V_2^{k_2} (V_1^{k_2-k_1} \eta_2 (U) \eta_1 (U^*)h).
\end{split}
\]
Since $\eta_2 (U) \eta_1 (U^*) \cls \subseteq \cls$, we have $x \in V^{k_1}_1 V^{k_2}_2 \cls$, which proves the reverse inclusion.
\end{proof}

We are now ready for the characterization of twisted shifts.

\begin{prop}\label{prop: n-shift}
Let $V = (V_1, \ldots, V_n)$ be a doubly twisted isometry on $\clh$. Then $V$ is a twisted shift if and only if $V_i|_{\cln_{\{i\}^c}}$ and $V_{j}V_{k}$ are shifts for all $i, j, k \in I_n$ and $j \neq k$.
\end{prop}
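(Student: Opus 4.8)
The plan is to prove both directions by reducing everything to the von Neumann--Wold decomposition $\clh = \bigoplus_{A \subseteq I_n} \clh_A$ of the doubly twisted isometry $V$ from Theorem~\ref{thm: Wold for Un twisted}, where $\clh_A = \bigoplus_{k \in \Z_+^{|A|}} V_A^k \big(\bigcap_{l \in \Z_+^{n-|A|}} V_{I_n \setminus A}^l \cln_A\big)$. Recall that $V$ is a twisted shift precisely when $\clh_A = \{0\}$ for all $A \neq I_n$, i.e.\ $\clh = \clh_{I_n}$. So the whole proposition is really a statement that the single scalar condition ``$\clh_A = \{0\}$ for $A \subsetneq I_n$'' is equivalent to the combined conditions ``$V_i|_{\cln_{\{i\}^c}}$ is a shift for all $i$'' and ``$V_j V_k$ is a shift for all $j \neq k$''.

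For the forward direction, assume $V$ is a twisted shift. Since each $V_i$ is then a shift, $V_i|_{\cln_{\{i\}^c}}$ is the restriction of a shift to a reducing subspace (by Lemma~\ref{lem: d twist basic lemma}(3), $\cln_{\{i\}^c}$ reduces $V_i$), hence a shift; this is the easy half. For the products $V_j V_k$ with $j \neq k$: I would show that when $\clh = \clh_{I_n}$, the unitary part of the isometry $V_j V_k$ is trivial. One clean route is to use Lemma~\ref{lemma: intersection} (or Lemma~\ref{twisted intersection} in the doubly twisted case), which identifies $\bigcap_m (V_j V_k)^m \clh$ with $\bigcap_{(k_j, k_k)} V_j^{k_j} V_k^{k_k} \clh$ restricted to the two coordinates $j, k$; since $V_j, V_k$ are both shifts and (by the doubly twisted relations) the relevant $N_A$ subspaces vanish, this intersection is $\{0\}$. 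Alternatively, observe directly that $\lim_m (V_j V_k)^m (V_j V_k)^{*m} = 0$ follows from $\lim_m V_j^m V_j^{*m} = 0$ together with the commutation relation \eqref{eqn: Vi VjVj*}, which lets one interleave powers of $V_j$ and $V_k$ and push them to zero.

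For the converse, assume $V_i|_{\cln_{\{i\}^c}}$ and $V_j V_k$ are all shifts; I must show $\clh_A = \{0\}$ for every $A \subsetneq I_n$. The key observation is that for $A \subsetneq I_n$ each summand $\clh_A$ sits inside a subspace on which some coordinate $V_j$ (with $j \in A^c$) acts as a \emph{unitary}: indeed from the representation, $\clh_A \subseteq \bigcap_{l} V_{I_n \setminus A}^l \cln_A \subseteq \clh_{V_j, u}$ for $j \in A^c$, because the innermost intersection over powers of $V_j$ forces vectors into the unitary part of $V_j$. So it suffices to show that no nonzero reducing subspace of $\clh$ can be contained in $\clh_{V_j, u}$ for some $j$; equivalently, that each $V_j$ is itself a shift, i.e.\ $\clh_{V_j, u} = \{0\}$. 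To get this, fix $j$ and fix any $k \neq j$: the hypothesis that $V_j V_k$ is a shift gives $\bigcap_m (V_j V_k)^m \clh = \{0\}$, and by Lemma~\ref{twisted intersection} this equals $\bigcap_{k_j} V_j^{k_j} \clh \cap \bigcap_{k_k} V_k^{k_k} \clh = \clh_{V_j, u} \cap \clh_{V_k, u}$. That is not yet zero individually, so I would instead run the argument on $\clh_{V_j, u}$ directly: $\clh_{V_j, u}$ reduces $V_i$ for all $i$ (Corollary~\ref{cor: dti admits od}), $V_j|_{\clh_{V_j, u}}$ is unitary, and on $\clh_{V_j, u}$ the tuple $(V_i)_{i \neq j}$ restricted to $\clh_{V_j, u} \cap \cln_{\{j\}^c}$ must be trivial since $V_i|_{\cln_{\{i\}^c}}$ is a shift while also a unitary there — forcing $\clh_{V_j, u} \cap \cln_{\{j\}^c} \cap \cln_{\{i\}^c} = \{0\}$, and iterating over all $i \neq j$ together with the Wold decompositions of the $V_i|_{\clh_{V_j,u}}$ collapses $\clh_{V_j, u}$ to $\{0\}$.

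The main obstacle I anticipate is this last collapsing step in the converse: carefully bookkeeping how the hypotheses ``$V_i|_{\cln_{\{i\}^c}}$ is a shift'' and ``$V_j V_k$ is a shift'' interact to kill \emph{every} proper summand $\clh_A$ simultaneously, rather than just one coordinate's unitary part. The cleanest way to organize it is probably to show by downward induction on $|A^c|$ that $\clh_A = \{0\}$: the base case $|A^c| = 1$, say $A^c = \{j\}$, uses that $V_j|_{\clh_A}$ is unitary while $\clh_A \subseteq \cln_{\{j\}^c}$-related subspace on which some product $V_j V_k$ would have to be unitary, contradicting that it is a shift; the inductive step peels off one more coordinate using Lemma~\ref{twisted intersection} and the shift hypothesis on the corresponding pair product. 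The forward direction and the ``easy half'' of the converse are routine; the weight of the proof is in verifying these vanishing statements, and I would expect the authors to streamline it using the explicit $\clh_A$ formula plus Lemmas~\ref{twisted intersection} and~\ref{lemma: intersection}.
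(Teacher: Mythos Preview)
Your forward direction is correct and essentially matches the paper's argument.

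For the converse, your final strategy---show $\clh_A = \{0\}$ for each proper $A \subsetneq I_n$ by killing the wandering piece $\bigcap_l V_{A^c}^l \cln_A$---is sound and in fact yields a proof at least as clean as the paper's, but your execution has a genuine gap. In your base case $|A^c| = 1$, say $A^c = \{j\}$, there is \emph{no} second index $k \in A^c$ available, so you cannot invoke ``$V_j V_k$ would have to be unitary'' (and in any event that claim is false: on $\clh_A$ the isometry $V_j$ is unitary but $V_k$ is a shift for $k\in A$, so $V_jV_k$ is not unitary there). The correct tool for this case is the \emph{other} hypothesis: here $\cln_A = \cln_{\{j\}^c}$, and the generating wandering subspace of $\clh_A$ is $\bigcap_m V_j^m \cln_{\{j\}^c}$, which is precisely the unitary part of $V_j|_{\cln_{\{j\}^c}}$ and hence $\{0\}$. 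For $|A^c|\geq 2$ your idea does work: pick distinct $j,k \in A^c$; since $\cln_A$ reduces the doubly twisted pair $(V_j,V_k)$, Lemma~\ref{lemma: intersection} gives
\[
\bigcap_{l} V_{A^c}^l \cln_A \;\subseteq\; \bigcap_{l_j,l_k} V_j^{l_j} V_k^{l_k} \cln_A \;=\; \bigcap_{m} (V_j V_k)^m \cln_A \;\subseteq\; \bigcap_m (V_jV_k)^m\clh \;=\;\{0\}.
\]
So no induction is needed---just this two-case split on $|A^c|$. (Minor slip: your displayed inclusion $\clh_A \subseteq \bigcap_l V_{A^c}^l \cln_A$ is backwards; the intersection is the wandering subspace \emph{inside} $\clh_A$, though your intended conclusion $\clh_A \subseteq \clh_{V_j,u}$ is correct.)

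The paper organizes the converse differently: rather than working with the $n$-tuple decomposition, it fixes $i$ and applies Theorem~\ref{thm: Wold for Un twisted} to the $(n{-}1)$-tuple $\tilde V = (V_1,\ldots,V_{i-1},V_{i+1},\ldots,V_n)$, obtaining $\clh = \bigoplus_{A \subseteq \{i\}^c} \clh_A$, and then computes $\bigcap_{k_i} V_i^{k_i} \clh_A$ piecewise. For $A \subsetneq \{i\}^c$ it picks $j \in \{i\}^c \setminus A$ and uses that $V_i V_j$ is a shift (via Lemma~\ref{lemma: intersection}); for $A = \{i\}^c$ it uses that $V_i|_{\cln_{\{i\}^c}}$ is a shift. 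The two approaches use the same ingredients with different bookkeeping; once your base case is repaired, yours is arguably the more direct route since it avoids the auxiliary $(n{-}1)$-variable decomposition.
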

\begin{proof}
If $V$ is a twisted shift, then, by the fact that $\cln_{\{i\}^c}$ reduces $V_i$ (see Lemma \ref{lem: d twist basic lemma}), it follows that $V_i|_{\cln_{\{i\}^c}}$ is a shift. Moreover, by Lemma \ref{lemma: intersection}, we have
\[
\bigcap_{m \in \Z_+} (V_j V_k)^m \clh = \bigcap_{m_j, m_k \in \Z_+} (V_j^{m_j} V_k^{m_k}) \clh \subseteq \bigcap_{m_j \in \Z_+} V_j^{m_j} \clh = \{0\},
\]
as $V_j$ is a shift. Therefore, $V_{j}V_{k}$ is a shift for all $j \neq k$. For the converse, suppose $V_i|_{\cln_{\{i\}^c}}$ and $V_{j}V_{k}$ are shifts for all $j \neq k$ and $i \in I_n$. Our goal is to prove that $V_i$ is a shift, that is,  $\underset{k_i\geq 0}{\bigcap} V_{i}^{k_i}\clh=0$ for all $i \in I_n$. To this end, fix $i \in I_n$. Note that
\[
\tilde{V} = (V_1, \ldots, V_{i-1}, V_{i+1}, \ldots, V_n),
\]
is an $(n-1)$-tuple of doubly twisted isometries with respect to the twist
\[
\tilde U = \{U_{pq}: p < q \text{ and } p,q \neq i\}.
\]
Therefore, by Theorem \ref{thm: Wold for Un twisted}, $\tilde V$ admits von Neumann-Wold decomposition $\clh = \bigoplus_{A \subseteq \{i\}^c} \clh_A$, where
\[
\clh_A = \underset{k \in \Z_+^{|A|}} {\bigoplus} V^k_A \Big(\bigcap_{l \in \Z_+^{n-1-|A|}} V^l_{J \setminus A}(\mathcal{N}_A)\Big) \qquad (A \subseteq \{i\}^c),
\]
and $J = I_n \setminus \{i\}$. Fix $A \subseteq \{i\}^c$ and $k_i \in \Z_+$. Suppose $m = n-1-|A|$. By Lemma \ref{lemma: twist commute}, for each $k \in \Z_+^{|A|}$, there exists a monomial $\eta_k$ such that $V_i^{k_i} V_A^k = V_A^k V_i^{k_i} \eta_k(U)$. We compute
\[
\begin{split}
V_i^{k_i} \clh_A & = V_i^{k_i} \Big(\underset{k \in \Z_+^{|A|}}{\bigoplus} V^k_A \Big(\bigcap_{l \in \Z_+^m} V^l_{J \setminus A} (\mathcal{N}_A) \Big)\Big)
\\
& = \bigoplus_{k \in \Z_+^{|A|}} V^k_A V_i^{k_i} \eta_k(U) \Big(\bigcap_{l \in \Z_+^m} V^l_{J \setminus A} (\mathcal{N}_A)\Big)
\\
&= \bigoplus_{k \in \Z_+^{|A|}} V^k_A V_i^{k_i} \Big(\bigcap_{l \in \Z_+^m} V^l_{J \setminus A} (\mathcal{N}_A)\Big)
\\
&= \bigoplus_{k \in \Z_+^{|A|}} V^k_A \Big(\bigcap_{l \in \Z_+^{m}} V_i^{k_i} V^l_{J \setminus A} (\mathcal{N}_A)\Big),
\end{split}
\]
where the last but one equality follows from the fact that
\[
\eta_k(U) \Big(\bigcap_{l \in \Z_+^m} V^l_{J \setminus A} (\mathcal{N}_A)\Big) = \Big(\bigcap_{l \in \Z_+^m} V^l_{J \setminus A} (\mathcal{N}_A)\Big).
\]
Therefore
\[
\bigcap_{k_i \in \Z_+} V_i^{k_i} \clh_A = \bigoplus_{k \in \Z_+^{|A|}} V^k_A \Big(\bigcap_{k_i \in \Z_+, l \in \Z_+^m} V_i^{k_i} V^l_{J \setminus A}
(\mathcal{N}_A)\Big).
\]
If $A \varsubsetneqq J = \{i\}^c$, there exists $j \in \{i\}^c \setminus A$, such that
\begin{equation}\label{eqn: proof shift}
\underset{k_i \in \Z_+, l \in \Z_+^m}{\bigcap} V_i^{k_i} V^l_{J \setminus A} (\mathcal{N}_A) = \underset{k_i,{ l_j \in \Z_+, l' \in \Z_+^{m-1}}} {\bigcap} V_i^{k_i} {V_j^{l_j} \Big(V^{l'}_{J \setminus A \cup \{j\}} (\mathcal{N}_A)\Big)},
\end{equation}
where $l=(l_1,\cdots,l_m)\in \Z^m_+$. Applying Lemma \ref{lemma: intersection}, we have
\[
\bigcap_{k_i, k_j \in \Z_+} V^{k_i}_i  V^{l_j}_j \Big( V^{l'}_{J \setminus A \cup \{j\}} \mathcal {N}_A \Big) \subseteq \bigcap_{k_i, k_j \in \Z_+} V^{k_i}_i  V^{l_j}_j \clh = \bigcap_{m \in \Z_+} (V_i  V_j) ^m \clh = \{0\},
\]
for all $l' \in \Z_+^{m-1}$. Then \eqref{eqn: proof shift} implies
\[
\bigcap_{k_i \in \Z_+, l \in \Z_+^m} V_i^{k_i} V^l_{J \setminus A}
(\mathcal{N}_A) = \{0\}.
\]
Then the equality preceding \eqref{eqn: proof shift} yields $\bigcap_{i \in \Z_+} V_i^{k_i} \clh_A = \{0\}$ for all $A \varsubsetneqq \{i\}^c$. Therefore
\[
\begin{split}
\bigcap_{k_i \in \Z_+} V_i^{k_i} \clh & = \bigcap_{k_i \in \Z_+} V_i^{k_i} \clh_{\{i\}^c}
\\
& = \bigoplus_{k \in \Z_+^{n-1}} V^k_{\{i\}^c}\Big(\bigcap_{k_i\in \Z_+} V_i^{k_i}(\mathcal{N}_{\{i\}^c})\Big).
\end{split}
\]
Since $V_i|_{\mathcal{N}_{\{i\}^c}}$ is a shift by assumption, we obtain $\bigcap_{k_i\in \Z_+} V_i^{k_i} \clh=0$, which completes the proof of the proposition.
\end{proof}

Now we turn to twisted isometries. Let $V = (V_1, \ldots, V_n)$ be a twisted isometry. Recall from Definition \ref{def: weak wandering} the $A$-weak wandering subspace for $V$ is given by
\[
\cle_A = \bigcap_{i\in A}\underset{\substack{B \subseteq \{i\}^c}}{\bigcap} \underset{k\in \Z_+^{|B|}}{\bigcap} \ker V_i^*V_B^k,
\]
for all nonempty $A \subseteq I_n$, and $\cle_\emptyset = \clh$. Also recall from Definition \ref{def: wand sub} that the $A$-wandering subspace for $V$ is given by
\[
\clw_A = \underset{l\in \Z_+^{n-|A|}}{\bigcap}V^l_{I_n\setminus A}\mathcal{E}_A \qquad (A \subseteq I_n).
\]
Assume for a moment that $V$ is doubly twisted. Fix $i \in A$. Let $B \subseteq \{i\}^c$ and $k \in \Z_+^{|B|}$. Since $i \notin B$ and $V$ is doubly twisted, by Lemma \ref{lemma: twist commute}, there exists a monomial $\eta$ such that
\[
V_i^* V_B^k = (\eta(U) V_B^k) V^*_i.
\]
Since $\eta(U) V_B^k$ is an isometry, it follows that $\cle_A = \bigcap_{i\in A} \ker V_i^* = \cln_A$, and hence
\begin{equation}\label{eqn: W_A repr fod dc}
\clw_A = \underset{l\in \Z_+^{n-|A|}}{\bigcap}V^l_{I_n\setminus A}\mathcal{N}_A \qquad (A \subseteq I_n),
\end{equation}
the wandering subspace of doubly twisted isometries (see Theorem \ref{thm: Wold for Un twisted}). In view of this observation and Proposition \ref{prop: n-shift}, we are now in a position to define a weaker version of shift that fits appropriately in orthogonal decompositions of twisted isometries.

\begin{definition}[Twisted weak shift]\label{def: weak shift}
A twisted isometry $V=(V_1,\ldots, V_n)$ is said to be a twisted weak shift if
\begin{enumerate}
\item $V_i|_{\cle_{\{i\}^c}}$ is a shift for all $i \in I_n$, and
\item $V_jV_k|_{\cle_{\{j,k\}^c}}$ is a shift for all $j,k \in I_n$ and $j \neq k$.
\end{enumerate}
\end{definition}

Lemma \ref{lemma:Ea invariant} ensures that the above definition is consistent. Moreover, in the case of pairs of commuting isometries, the above definition coincides with Popovici's weak bi-shift.

\section {Twisted isometries}\label{sec: Popovici-Wold type decomposition}

In this section, we prove that a twisted isometry $V=(V_1, \ldots, V_n)$ admits orthogonal decompositions in the sense of Definition \ref{def: orth decomp}. Moreover, we prove that $V|_{\clh_{I_n}}$ is a twisted weak shift. For the case of commuting pairs of isometries, our result recovers the Popovici decomposition. First, we prove that $V$ indeed admits an orthogonal decomposition. Recall that the $A$-wandering subspace for $V$ is defined by (see Definition \ref{def: wand sub})
\[
\clw_A = \underset{l\in \Z_+^{n-|A|}}{\bigcap}V^l_{I_n\setminus A}\mathcal{E}_A \qquad (A \subseteq I_n),
\]
where (see Definition \ref{def: weak wandering}) $\cle_{\emptyset}=\clh$ and
\[
\mathcal{E}_A:=\bigcap_{i\in A}	\bigcap_{B \subseteq \{i\}^c}
\bigcap_{k\in \Z_+^{|B|}} \Big(\ker V_i^*V_B^k\Big),
\]
for all $A \subseteq I_n$ such that $A \neq \emptyset$.

\begin{prop}{\label{prop: doubly-commuting-part}}
Let $V=(V_1,\dots ,V_n)$ be a twisted isometry. For each $A \subsetneqq I_n$, define
\[
\clh_{V, A}:= \underset{k\in \Z_+^{|A|}}{\bigoplus}V_A^k \clw_A.
\]
Then the following holds:
\begin{enumerate}
\item $\clh_{V, A}$ reduces $V_i$ for all $i \in I_n$.
\item $V_i|_{\clh_{V,A}}$ is a shift for all $i\in A$.
\item $V_i|_{\clh_{V,A}}$ is a unitary for all $i\notin A$.
\item The $n$-tuple $V|_{\clh_{V, A}}$ is doubly twisted.
\end{enumerate}
Moreover, $\clh_{V, A}$ is maximal, that is, if a closed subspace $\clk_{V, A} \subseteq \clh$ satisfies the above four conditions, then $\clk_{V, A} \subseteq \clh_{V, A}$.
\end{prop}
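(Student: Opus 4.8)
The plan is to verify the four numbered properties in order, using the structural lemmas of Section \ref{sec: wandering sub}, and then establish maximality by a separate argument. For property (1), I would first treat the two cases $i \in A^c$ and $i \in A$ separately. For $i \in A^c$, Lemma \ref{lemma 3}(3) already gives $V_i \clh_{V,A} = \clh_{V,A}$, so $\clh_{V,A}$ is $V_i$-invariant; to see it is reducing I would show $V_i^* \clh_{V,A} \subseteq \clh_{V,A}$ by using $V_i^* V_A^k = \eta(U) V_A^k V_i^*$ (Lemma \ref{lemma: twist commute}) together with $V_i^* \clw_A = \clw_A$ (which follows from Lemma \ref{lemma 3}(1), since $V_i$ restricted to $\clw_A$ is onto and isometric, hence unitary on $\clw_A$, and $U$-monomials preserve $\clw_A$ by Lemma \ref{lemma 3}(2)). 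For $i \in A$, the wandering property of $\clw_A$ (which holds since $\clw_A \subseteq \cle_A$ and Lemma \ref{lemma 2} applies) makes $\clh_{V,A} = \bigoplus_{k} V_A^k \clw_A$ the standard orthogonal-sum model; invariance under $V_i$ is the shift of a coordinate in the multi-index (using $V_i V_A^k = V_A^{k'} V_i \eta(U)$-type relations via Lemma \ref{lemma: twist commute} to reorder), and invariance under $V_i^*$ is likewise coordinate-wise, using $V_i^* V_i^{k_i} = V_i^{k_i-1}$ and $V_i^* \clw_A \perp \clw_A$ when $k_i = 0$ (from the wandering property and $\clw_A \subseteq \cle_A \subseteq \ker V_i^* V_B^k$ taking $B = \emptyset$, $i \in A$).

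Property (2) then follows immediately: for $i \in A$, $V_i|_{\clh_{V,A}}$ is unitarily equipped with the obvious wandering subspace $\bigoplus_{k : k_i = 0} V_A^k \clw_A$ and $\bigcap_{m} V_i^m \clh_{V,A} = 0$ since every element of $\clh_{V,A}$ has finitely supported expansion in the $k_i$-grading, so the $V_i$-unitary part is trivial. Property (3): for $i \in A^c$, Lemma \ref{lemma 3}(3) gives $V_i \clh_{V,A} = \clh_{V,A}$, so the surjective isometry $V_i|_{\clh_{V,A}}$ is unitary. For property (4), I would show that on $\clh_{V,A}$ the relation $V_i^* V_j = U_{ij}^* V_j V_i^*$ holds (not merely $V_i V_j = U_{ij} V_j V_i$). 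The twisted relation is already global; the extra doubly-twisted identity on the subspace should be extracted from the fact that, on $\clh_{V,A}$, the $V_i$ with $i \in A^c$ are unitaries commuting (up to the twist) with everything, while for $i, j \in A$ the model structure $\bigoplus_k V_A^k \clw_A$ together with $\clw_A \subseteq \cle_A$ (hence $V_i^* \clw_A = \{0\}$-type orthogonality against lower terms) forces $V_i^* V_j$ to act as $U_{ij}^* V_j V_i^*$; concretely one checks both sides on each generator $V_A^k w$, $w \in \clw_A$, and uses the defining membership $\clw_A \subseteq \ker V_i^* V_B^k$.

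The main obstacle, and the step I would spend the most care on, is \emph{maximality}. Given a closed $V$-reducing subspace $\clk_{V,A}$ satisfying (1)--(4), I want $\clk_{V,A} \subseteq \clh_{V,A}$. The strategy is: since $V_i|_{\clk_{V,A}}$ is a shift for $i \in A$, iterating the von Neumann--Wold decomposition coordinate-by-coordinate over the $|A|$ indices in $A$ (which is legitimate because, by (4), $V|_{\clk_{V,A}}$ is doubly twisted, so Theorem \ref{thm: Wold for Un twisted} and Lemma \ref{twisted intersection} apply) expresses $\clk_{V,A} = \bigoplus_{k \in \Z_+^{|A|}} V_A^k \clk_0$ where $\clk_0 = \bigcap_{i \in A} \ker(V_i^*|_{\clk_{V,A}})$. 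One then argues $\clk_0 \subseteq \clw_A$: the membership $\clk_0 \subseteq \bigcap_{i\in A} \ker V_i^*$ is automatic, but to land inside $\cle_A = \bigcap_{i\in A}\bigcap_{B}\bigcap_k \ker V_i^* V_B^k$ one must use that $\clk_{V,A}$ reduces every $V_j$, $j \in A^c$, and that on $\clk_{V,A}$ those $V_j$ are \emph{unitary}, so $V_B^k \clk_0 \subseteq \clk_{V,A}$ and $V_i^* V_B^k$ annihilates $\clk_0$ by the same orthogonality that defines $\clk_0$ — here the doubly-twisted relation on $\clk_{V,A}$ is exactly what lets one commute $V_i^*$ past $V_B^k$ up to an isometry. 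Finally, the further intersection $\clw_A = \bigcap_{l} V^l_{I_n\setminus A}\cle_A$: since the $V_j|_{\clk_{V,A}}$ ($j\in A^c$) are onto, $\clk_0 \subseteq \bigcap_l V^l_{I_n\setminus A} \clk_0 \subseteq \bigcap_l V^l_{I_n \setminus A}\cle_A = \clw_A$. Combining, $\clk_{V,A} = \bigoplus_k V_A^k \clk_0 \subseteq \bigoplus_k V_A^k \clw_A = \clh_{V,A}$. The delicate point throughout is keeping track of the twist monomials $\eta(U)$ when reordering products of the $V_i$'s and confirming they preserve $\clw_A$ and $\cle_A$, which is exactly what Lemmas \ref{lemma 1}, \ref{lemma: twist commute}, and \ref{lemma 3} are set up to handle.
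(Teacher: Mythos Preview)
Your proposal is correct and follows essentially the same route as the paper: properties (1)--(3) via Lemma \ref{lemma: twist commute} and Lemma \ref{lemma 3}, property (4) by checking $V_i^* V_j = U_{ij}^* V_j V_i^*$ on generators $V_A^k w$, and maximality by applying Theorem \ref{thm: Wold for Un twisted} to the doubly twisted tuple $V|_{\clk_{V,A}}$ and showing its wandering piece lies in $\clw_A$. One minor imprecision: in (2) you say ``every element of $\clh_{V,A}$ has finitely supported expansion in the $k_i$-grading,'' which is false for general elements; the correct reason $\bigcap_m V_i^m \clh_{V,A} = \{0\}$ is that $V_i^m \clh_{V,A} = \bigoplus_{k: k_i \geq m} V_A^k \clw_A$, and these intersect trivially.
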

\begin{proof}
Suppose $A=\emptyset$. Then $\clh_{V,\emptyset} = \bigcap_{k \in \Z_+^n} V^k \clh$. By part (3) of Lemma \ref{lemma 3}, for each $i \in I_n$, we have $V_i \clh_{V,\emptyset} = \clh_{V,\emptyset}$ and hence $V_i^* \clh_\emptyset = \clh_\emptyset$. Therefore, $\clh_{V,\emptyset}$ reduces $V_i$ and $V_i|_{\clh_{V,\emptyset}}$ is a unitary for all $i \in I_n$. Next, suppose $A (\neq \emptyset)$ is a proper subset of $I_n$. Fix $i \in A$. For each $k \in \Z_+^{|A|}$, there exists a monomial $\eta_k$ such that $V_i  V^k_A = V^k_A V_i \eta_k(U)$ (see Lemma \ref{lemma: twist commute}). By part (2) of Lemma \ref{lemma 3}, we know that $\eta_k(U) \clw_A = \clw_A$. Therefore
\[
\begin{split}
V_i \clh_{V,A} & = V_i\Big(\bigoplus_{k \in \Z_+^{|A|}} V^k_A \clw_A\Big)
\\
& = \bigoplus_{k \in \Z_+^{|A|}} V^k_A V_i \Big(\eta_k(U) \clw_A\Big)
\\
& = \bigoplus_{k \in \Z_+^{|A|}} V^{k+e_i}_A \clw_A,
\end{split}
\]
and hence $V_i \clh_{V,A} \subseteq \clh_{V,A}$. Now we prove that $V_i^* \clh_{V,A} \subseteq \clh_{V,A}$. Fix $k \in \Z_+^{|A|}$. If $k_i=0$, then, $V_i^*V_A^k(\xi)=0$, as $\xi\in \clw_A \subseteq \cle_A$. If $k_i>0$, then by Lemma \ref{lemma: twist commute}, there exist monomials $\eta_1$ and $\eta_2$ such that
\begin{align*}
V_i^*V_A^{k}(\xi)
&= V_{i}^* V^{k_i}_i V_{A}^{k-k_ie_i} \eta_1 (U^*)(\xi)\\
&= V_{i}^{k_i-1}V_{A}^{k-k_ie_i}\eta_1(U^*)(\xi)
= V_A^{k-e_i}\eta_2(U)\eta_1 (U^*)(\xi).
\end{align*}
Again, by part (2) of Lemma \ref{lemma 3}, $\eta_2(U)\eta_1 (U^*)(\xi) \in \clw_{A}$. This implies $\clh_{V, A}$ reduces $V_i$ for all $i\in A$. If $i\notin A$, then part (3) of Lemma \ref{lemma 3} yields $V_i\clh_{V, A}= \clh_{V, A}$, which completes the proof of (1).

\noindent Next, for each $i \in I_n$, we set $\tilde{V_i}:=V_i|_{\clh_{V, A}}$ and $\tilde V:=V|_{\clh_{V, A}}$. Clearly, $\tilde{V_i}$ is a shift for all $i\in A$ and unitary for all $i\notin A$. This proves (2) and (3). To check (4), we set
\[
\tilde U_{ij}:=U_{ij}|_{\clh_{V, A}} \qquad (i \neq j).
\]
Clearly, $\tilde{V_i}\tilde{V_j} = \tilde U_{ij} \tilde{V_j}\tilde{V_i}$ for all $i \neq j$. Fix $i \neq j$ in $I_n$. If any one of $\tilde{V_i}$ and $\tilde{V_j}$ is unitary, then evidently $\tilde{V_i}^*\tilde{V_j}=\tilde{U_{ij}}^*\tilde{V_j}\tilde{V_i}^*$. Suppose now that both $\tilde{V_i}$ and $\tilde{V_j}$ are shifts. Of course, in this case $i,j\in A$. Let $V_A^{k}(\xi)\in \clh_{V, A}$ for some $k \in \Z^{|A|}_+$ and $\xi\in \clw_A$. If $k_i=0$, then as $\xi\in \clw_A \subseteq \mathcal{E}_A$, it follows that
\[
\tilde V_i^*\tilde V_j	\left(V_{A}^{k}\xi\right) = 0	= \tilde V_j\tilde V_i^*\left(V_{A }^{k}\xi\right).
\]
If $k_i>0$ and $i\leq j$, then by Lemma \ref{lemma: twist commute}, there exist monomials $\eta _1, \eta _2$, and $\eta _3$ such that
\begin{align*}
\tilde V_i^*\tilde V_j \left(V_A^{k}\xi\right)
& = V_i^* V_A^{k+e_j}\eta _1 (U)\xi
\\
& = V_i^*V_i^{k_i} V_{A}^{k - k_i e_i + e_j}\eta _{2} (U^*)\eta _1 (U)\xi
\\
&= V_A^{k-e_i+e_j}\eta _3 (U)\eta _{2} (U^*)\eta _1 (U)\xi,
\end{align*}
and
\begin{align*}
\tilde V_j \tilde V_i^*\left(V_A^{k}\xi\right) & = V_jV_i^*V_A^{k}\xi
\\
&= V_jV_i^*V_i^{k_i}V_{A }^{k - k_i e_i}\eta _{2} (U^*)\xi
\\
& = V_j V_A^{k-e_i}\eta _3 (U)\eta _{2} (U^*)\xi
\\
& = V_A^{k-e_i+e_j}(U_{ji}^*)\eta _1 (U)\eta _3 (U)\eta _{2} (U^*)\xi
\\
& = U_{ji}^*V_i^*V_jV_A^{k}\xi
\\
& = \tilde U_{ji}^*\tilde V_i^*\tilde V_j\left(V_A^{k}(\xi)\right),
\end{align*}
where the last but one equality follows from the fact that $V_j	V_A^{k} = V_A^{k+e_j}\eta _1(U)$, and
\[
V_j V_A^{k-e_i} = V_A^{k-e_i+e_j}(U_{ji}^*)\eta _1(U),
\]
whenever $i<j$. Similarly, for $i>j$, it follows that $\tilde{V}_i^*\tilde{V}_j = \tilde{U}_{ij}^* \tilde{V}_j \tilde{V}_i^*$, which proves that $V|_{\clh_{V,A}}$ is doubly twisted corresponding to the twist $\{\tilde U_{ij}\}_{i < j}$.

\noindent Now we prove the maximality of $\clh_{V,A}$. Consider a closed subspace $\clk_{V, A} \subseteq \clh$ satisfying all the four conditions. Applying Theorem \ref{thm: Wold for Un twisted} to the doubly twisted isometry $V|_{\clk_{V, A}}$, we have
\[
\clk_{V, A} = \bigoplus_{k\in \Z_+^{|B|}}
V_A^k \Big(\underset{l\in \Z_+^{n-|A|}}{\bigcap}V^l_{I_n\setminus A} \Big(\underset{i\in A}{\bigcap} \ker V_i^*\bigcap \clk_{V, A}\Big)\Big).
\]
Let $i \in A$, $B\subseteq \{i\}^c$, and suppose $x\in (\cap_{i\in A} \ker V_i^*) \cap \clk_{V, A}$. Since $V|_{\clk_{V, A}}$ is doubly twisted, it follows that $V_i^*V_B^kx=0$, and hence $x \in \mathcal{E}_A$. Therefore, if $h\in \clk_{V, A}$, then for any $l \in \Z_+^{|B|}$ and $B \subseteq A^c$, we can write
\[
h=\sum_{k\in \Z_+^{|B|}} V_A^k(V^l_B x_B),
\]
for some $x_B \in (\cap_{i\in A} \ker V_i^*) \cap \clk_{V, A}$. Since $x_B \in \mathcal{E}_A$, it follows that $h\in \clh_{V,A}$, which proves that $\clh_{V,A}$ is maximal and completes the proof of the proposition.
\end{proof}

We are finally ready for orthogonal decompositions of twisted isometries.

\begin{thm}\label{thm: twisted dec final}
Let $V=(V_1,\dots ,V_n)$ be a twisted isometry on $\clh$. Then there is a unique orthogonal decomposition	
\[
\clh = \bigoplus_{A \subseteq I_n}\clh_{V,A},
\]
where
\begin{enumerate}
\item $\clh_{V,A}$ reduces $V$ for all $A \subseteq I_n$.
\item $\clh_{V,A}$ is maximal for all $A \subsetneqq I_n$ in the sense that $V_i|_{\clh_{V,A}}$ is a shift if $i \in A$ and $V_j|_{\clh_{V,A}}$ is a unitary if $j \in A^c$.
\item $V|_{\clh_{V,I_n}}$ is a twisted weak shift.
	\end{enumerate}
\end{thm}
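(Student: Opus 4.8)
The plan is to assemble the decomposition from the family $\{\clh_{V,A}\}_{A\subseteq I_n}$ already produced in Proposition~\ref{prop: doubly-commuting-part} (for $A\subsetneqq I_n$) together with a residual subspace $\clh_{V,I_n}$ to be defined as the orthogonal complement. The first task is to prove mutual orthogonality of the $\clh_{V,A}$, $A\subsetneqq I_n$. Since each $\clh_{V,A}=\bigoplus_{k\in\Z_+^{|A|}}V_A^k\clw_A$ with $\clw_A\subseteq\cle_A$, the key observation is that if $A\neq A'$ then there is an index $i$ lying in exactly one of them, say $i\in A\setminus A'$; then $V_i|_{\clh_{V,A}}$ is a shift while $V_i|_{\clh_{V,A'}}$ is a unitary (parts (2),(3) of the Proposition). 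Standard reasoning shows that a vector which is simultaneously in the shift part of $V_i$ and in the unitary part of $V_i$ must be $0$; here both $\clh_{V,A}$ and $\clh_{V,A'}$ reduce $V_i$, so $\clh_{V,A}\subseteq\clh_{V_i,s}$ and $\clh_{V,A'}\subseteq\clh_{V_i,u}=\clh_{V_i,s}^\perp$, giving $\clh_{V,A}\perp\clh_{V,A'}$. Thus $\clk:=\bigoplus_{A\subsetneqq I_n}\clh_{V,A}$ is a genuine orthogonal sum, and we set $\clh_{V,I_n}:=\clk^\perp$.

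Next I would verify that $\clh_{V,I_n}$ reduces each $V_i$ and each $U_{st}$. Each $\clh_{V,A}$ ($A\subsetneqq I_n$) reduces $V$ by part (1) of the Proposition, and by \eqref{eqn: Uij H VA} it is invariant under the twist; hence $\clk$ reduces $V$ and the $U_{st}$, and therefore so does $\clk^\perp=\clh_{V,I_n}$. This gives (1). Item (2) is essentially a restatement of parts (2) and (3) of Proposition~\ref{prop: doubly-commuting-part}, together with the maximality clause of that Proposition, which already asserts that any reducing subspace on which $V_i$ is a shift for $i\in A$ and a unitary for $j\in A^c$ is contained in $\clh_{V,A}$. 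For uniqueness of the whole decomposition: given another such decomposition $\clh=\bigoplus_A\clh'_A$, each $\clh'_A$ for $A\subsetneqq I_n$ satisfies the four conditions of the Proposition (the fourth, double-twistedness, following because on a reducing subspace where for every pair $i,j$ at least one of $V_i,V_j$ is unitary one automatically upgrades $V_iV_j=U_{ij}V_jV_i$ to the doubly twisted relation, exactly as in the proof of part (4)), so maximality forces $\clh'_A\subseteq\clh_{V,A}$; summing and comparing complements gives $\clh'_A=\clh_{V,A}$ for all $A$, including $A=I_n$.

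The substantive part is (3): $V|_{\clh_{V,I_n}}$ is a twisted weak shift. Write $W:=V|_{\clh_{V,I_n}}$, a twisted isometry on $\clh_{V,I_n}$ by (1). By Definition~\ref{def: weak shift} I must check that $W_i|_{\cle^W_{\{i\}^c}}$ is a shift for each $i$ and that $W_jW_k|_{\cle^W_{\{j,k\}^c}}$ is a shift for $j\neq k$, where $\cle^W_B$ denotes the weak $B$-wandering subspace computed inside $\clh_{V,I_n}$. The plan is to show first that the wandering subspaces behave well under passing to $\clh_{V,I_n}$: since $\clh_{V,I_n}$ reduces $V$, one has $\cle^W_B=\cle_B\cap\clh_{V,I_n}$ and, more importantly, $\clw_B^W=\clw_B\cap\clh_{V,I_n}$. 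The crucial point is that for $B\subsetneqq I_n$ the ``simple'' summand $\clh_{V,B}$ is precisely $\bigoplus_{k}V_B^k\clw_B$ and is orthogonal to $\clh_{V,I_n}$, which should force $\clw_B\cap\clh_{V,I_n}=\clw_B^W$ to have trivial non-shift behaviour — concretely, I expect that for $A=\{i\}^c$ the subspace $\clw_{\{i\}^c}\cap\clh_{V,I_n}$ is $W_i$-reducing and $W_i$ restricted to it is a shift, because any unitary summand of $W_i|_{\cle^W_{\{i\}^c}}$ would generate, via the $V_A^k$-orbit, a reducing subspace satisfying the four conditions of Proposition~\ref{prop: doubly-commuting-part} for $A=\{i\}^c$ and hence would lie in $\clh_{V,\{i\}^c}\perp\clh_{V,I_n}$, a contradiction unless it is $\{0\}$. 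The same orthogonality-versus-maximality dichotomy, applied with $A=\{j,k\}^c$ and using Lemma~\ref{lemma: intersection} to relate $\bigcap_m(V_jV_k)^m$ to $\bigcap V_j^{k_j}V_k^{k_k}$, should handle the product condition. The main obstacle I anticipate is precisely this last step: making rigorous that ``$W$ on $\clh_{V,I_n}$ has no hidden unitary or doubly-commuting-shift piece'', i.e. translating the maximality of the lower summands into the shift property of the restricted operators $W_i|_{\cle^W_{\{i\}^c}}$ and $W_jW_k|_{\cle^W_{\{j,k\}^c}}$, since $\cle_B$ for proper $B$ is defined by infinitely many kernel conditions and one must check that the extremal part extracted from $W$ actually reassembles into one of the $\clh_{V,B}$ via the twist-commutation Lemma~\ref{lemma: twist commute} and the identities of Lemma~\ref{lemma 3}.
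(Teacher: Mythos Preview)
Your approach is essentially the paper's: define $\clh_{V,A}$ via Proposition~\ref{prop: doubly-commuting-part} for $A\subsetneqq I_n$, put $\clh_{V,I_n}:=\big(\bigoplus_{A\subsetneqq I_n}\clh_{V,A}\big)^\perp$, and read off (1) and (2). Your orthogonality argument via $\clh_{V,A}\subseteq\clh_{V_i,s}$, $\clh_{V,A'}\subseteq\clh_{V_i,u}$ is correct and actually fills in a step the paper leaves implicit.

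The place where you diverge is part (3), and here you are making it harder than it is. The ``main obstacle'' you anticipate dissolves immediately once you notice that for $A=\{i\}^c$ the unitary part of $W_i$ acting on $\cle^W_{\{i\}^c}=\cle_{\{i\}^c}\cap\clh_{V,I_n}$ is
\[
\bigcap_{m\geq 0} V_i^m\big(\cle_{\{i\}^c}\cap\clh_{V,I_n}\big)=\Big(\bigcap_{m\geq 0}V_i^m\cle_{\{i\}^c}\Big)\cap\clh_{V,I_n}=\clw_{\{i\}^c}\cap\clh_{V,I_n},
\]
using only that $\clh_{V,I_n}$ reduces $V_i$. But $\clw_{\{i\}^c}\subseteq\clh_{V,\{i\}^c}\perp\clh_{V,I_n}$ by construction, so this intersection is $\{0\}$. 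That is the entire argument; there is no need to ``generate via the $V_A^k$-orbit'' or to invoke maximality. The product case is identical: the unitary part of $V_jV_k$ on $\cle_{\{j,k\}^c}\cap\clh_{V,I_n}$ sits inside $\big(\bigcap_{l\in\Z_+^2}V_{\{j,k\}}^l\cle_{\{j,k\}^c}\big)\cap\clh_{V,I_n}=\clw_{\{j,k\}^c}\cap\clh_{V,I_n}=\{0\}$, the first containment coming from the easy direction of Lemma~\ref{lemma: intersection} (only invariance of $\cle_{\{j,k\}^c}$ under $V_j,V_k$ and the twist is needed, which you have from Lemmas~\ref{lemma 1} and~\ref{lemma:Ea invariant}). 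Incidentally, your sentence that ``$\clw_{\{i\}^c}\cap\clh_{V,I_n}$ is $W_i$-reducing and $W_i$ restricted to it is a shift'' is backwards: that intersection \emph{is} the unitary part, and the point is that it vanishes.

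One small caution on your uniqueness paragraph: the maximality clause in Proposition~\ref{prop: doubly-commuting-part} requires condition (4), and your claim that double-twistedness is automatic ``because for every pair $i,j$ at least one of $V_i,V_j$ is unitary'' fails when $|A|\geq 2$ and both $i,j\in A$. The paper does not spell out uniqueness either; it is really the explicit formula $\clh_{V,A}=\bigoplus_k V_A^k\clw_A$ that pins the decomposition down.
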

\begin{proof}
For $A\subsetneqq I_n$, define $\clh_{V,A}$ as in Proposition \ref{prop: doubly-commuting-part} and set $\clh_{I_n}:=\clh \ominus \bigoplus_{A \subsetneqq I_n }\clh_{V,A}$. Then (1) and (2) follow from Proposition \ref{prop: doubly-commuting-part}. For the last part we need to prove that
\begin{enumerate}
	\item[(a)] $V_i|_{\mathcal{E}_{\{i\}^c}\cap \clh_{V, I_n}}$ is a shift for all $i \in I_n$, and
	\item[(b)] $V_j V_k|_{\mathcal{E}_{\{j,k\}^c} \cap \clh_{V, I_n}}$ is a shift for all $j\neq k$.
\end{enumerate}
To this end, fix $i,j,k \in I_n$. Let $\{i\} = I_n\setminus A$ and
$\{j,k\} = I_n\setminus B$. Then by the definition of $\clh_{V, A}$ and $\clh_{V, B}$ (see Proposition \ref{prop: doubly-commuting-part}), we have
\[
\clh_{V,A}= \underset{k\in \Z_+^{|A|}}{\bigoplus}
V_A^k\Big(\underset{m\in \Z_+}{\bigcap}
V^m_i\mathcal{E}_{\{i\}^c}\Big), \text{ and } \clh_{V,B}= \underset{k\in \Z_+^{|B|}}{\bigoplus} V_B^k \Big(\underset{l\in \Z_+^2}{\bigcap}
V^l_{\{j,k\}} \mathcal{E}_{\{j,k\}^c} \Big).
\]
Since $\clh_{V, I_n} \perp \clh_{V,C}$ for all $C \subsetneq I_n$, in particular, we have
\[
\clh_{V, I_n} \bigcap \Big(\underset{m\in \Z_+}{\bigcap}
V^m_i\mathcal{E}_{\{i\}^c}\Big) = \{0\}
= \clh_{V, I_n} \bigcap \Big(\underset{l\in \Z_+^2}{\bigcap}
V^l_{\{j,k\}}\mathcal{E}_{\{j,k\}^c}\Big),
\]
that is, the unitary parts of $V_i|_{\mathcal{E}_{\{i\}^c}\cap \clh_{V, I_n}}$ and $V_j V_k|_{\mathcal{E}_{\{j,k\}^c} \cap \clh_{V, I_n}}$ are trivial. This proves that $\clh_{V, I_n}$ is a twisted weak shift.
\end{proof}

Clearly, in the case of commuting pairs of isometries, our result recovers the Popovici decomposition. It is also worth noting that the extension of Popovici decomposition from pairs of isometries to $n$-tuples of commuting isometries, $n > 3$, appears to be somewhat less obvious. For instance, the choice of wandering subspaces and the weak shift counterpart for $n$-tuples of isometries does not directly follow from the case of pairs of isometries.

\bigskip

\noindent\textbf{Acknowledgement:} The first author thanks Harish-Chandra Research Institute, Prayagraj, and ISI Bangalore for the Postdoctoral fellowship. The second named author is supported in part by Core Research Grant (CRG/2019/000908), by SERB (DST), Government of India.

\end{document}